\definecolor{uuuuuu}{rgb}{0.26666666666666666,0.26666666666666666,0.26666666666666666}
\definecolor{xdxdff}{rgb}{0.49019607843137253,0.49019607843137253,1.}
\definecolor{ffqqqq}{rgb}{1.,0.,0.}
\definecolor{uuuuuu}{rgb}{0.26666666666666666,0.26666666666666666,0.26666666666666666}
\definecolor{qqwuqq}{rgb}{0.,0.39215686274509803,0.}
\definecolor{zzttqq}{rgb}{0.6,0.2,0.}
\definecolor{xdxdff}{rgb}{0.49019607843137253,0.49019607843137253,1.}
\definecolor{qqqqff}{rgb}{0.,0.,1.}
\definecolor{cqcqcq}{rgb}{0.7529411764705882,0.7529411764705882,0.7529411764705882}
\definecolor{uuuuuu}{rgb}{0.26666666666666666,0.26666666666666666,0.26666666666666666}
\definecolor{qqwuqq}{rgb}{0.,0.39215686274509803,0.}
\definecolor{zzttqq}{rgb}{0.6,0.2,0.}
\definecolor{xdxdff}{rgb}{0.49019607843137253,0.49019607843137253,1.}
\definecolor{qqqqff}{rgb}{0.,0.,1.}
\definecolor{cqcqcq}{rgb}{0.7529411764705882,0.7529411764705882,0.7529411764705882}
\theoremstyle{plain}
\newtheorem{theorem}[subsection]{Theorem}
\newtheorem{lemma}[subsection]{Lemma}
\newtheorem{prop}[subsection]{Proposition}
\newtheorem{defi}[subsection]{Definition}
\theoremstyle{definition}
\newtheorem{cor}[subsection]{Corollary}
\newtheorem{remark}[subsection]{Remark}
\newtheorem{example}[subsection]{Example}
\newcommand{\uu}{\cup}
\newcommand{\ii}{\cap}
\newcommand{\UU}{\bigcup}
\newcommand{\II}{\bigcap}
\newcommand{\ci}{\subseteq}
\newcommand{\sci}{\subset}
\newcommand{\es}{\emptyset}
\newcommand{\set}[1]{\{#1\}}
\newcommand{\ga}{\alpha}
\newcommand{\gb}{\beta}
\newcommand{\gd}{\delta}
\renewcommand{\gg}{\gamma}
\newcommand{\go}{\omega}
\newcommand{\gs}{\sigma}
\newcommand{\gt}{\tau}
\newcommand{\tit}{\textit}
\newcommand{\D}[1]{\mathbb{#1}}
\newcommand{\te}{\text}
\begin{document}

To appear, Israel Journal of Mathematics 
\title{Optimal quantization for the Cantor distribution generated by infinite similutudes}

\author{Mrinal Kanti Roychowdhury}
\address{School of Mathematical and Statistical Sciences\\
University of Texas Rio Grande Valley\\
1201 West University Drive\\
Edinburg, TX 78539-2999, USA.}
\email{mrinal.roychowdhury@utrgv.edu}

\subjclass[2010]{60Exx, 28A80, 94A34.}
\keywords{Cantor distribution, infinite similitudes, optimal quantizers, quantization error.}
\thanks{The research of the author was supported by U.S. National Security Agency (NSA) Grant H98230-14-1-0320}

\date{}
\maketitle

\pagestyle{myheadings}\markboth{Mrinal Kanti Roychowdhury}{Optimal quantization for the Cantor distribution generated by infinite similutudes}

\begin{abstract}  Let $P$ be a Borel probability measure on $\mathbb R$ generated by an infinite system of similarity mappings $\{S_j : j\in \mathbb N\}$ such that $P=\sum_{j=1}^\infty \frac 1{2^j} P\circ S_j^{-1}$, where for each $j\in \mathbb N$ and $x\in \mathbb R$, $S_j(x)=\frac 1{3^{j}}x+1-\frac 1 {3^{j-1}}$. Then, the support of $P$ is the dyadic Cantor set $C$ generated by the similarity mappings $f_1, f_2 : \mathbb R \to \mathbb R$ such that  $f_1(x)=\frac 13 x$ and $f_2(x)=\frac 13 x+\frac 23$ for all $x\in \mathbb R$. In this paper, using the infinite system of similarity mappings $\{S_j : j\in \mathbb N\}$ associated with the probability vector $(\frac 12, \frac 1{2^2}, \cdots)$, for all $n\in \mathbb N$, we determine the optimal sets of $n$-means and the $n$th quantization errors for the infinite self-similar measure $P$. The technique obtained in this paper can be utilized to determine the optimal sets of $n$-means and the $n$th quantization errors for more general infinite self-similar measures.
\end{abstract}

\section{Introduction}
 The history of the theory and practice of quantization dates back to 1948, although similar ideas had appeared in the literature in 1897 (see \cite{S}). It is used in many applications such as signal processing and telecommunications, data compression,  pattern recognitions and cluster analysis (for details see \cite{GG, GN}). It is also closely connected with centroidal Voronoi tessellations. Let $\D R^d$ denote the $d$-dimensional Euclidean space, $\|\cdot\|$ denote the Euclidean norm on $\D R^d$ for any $d\geq 1$, and $P$ be a Borel probability measure on $\D R^d$. For a finite set $\ga \sci \D R^d$ and $a\in \ga$, by $M(a|\ga)$ we denote the set of all elements in $\D R^d$ which are nearest to $a$ among all the elements in $\ga$, i.e.,
\[M(a|\ga)=\set{x \in \D R^d : \|x-a\|=\min_{b \in \ga}\|x-b\|}.\]
$M(a|\ga)$ is called the \tit{Voronoi region} generated by $a\in \ga$. On the other hand, the set $\set{M(a|\ga) : a \in \ga}$ is called the \tit{Voronoi diagram} or \tit{Voronoi tessellation} of $\D R^d$ with respect to the set $\ga$.
\begin{defi} \label{defi00}
A set $\ga\sci \D R^d$ is called a \tit{centroidal Voronoi tessellation} (CVT) with respect to a probability distribution $P$ on $\D R^d$,  if it satisfies the following two conditions:

$(i)$ $P(M(a|\ga)\ii M(b|\ga))=0$ for $a, b\in \ga$, and $a \neq b$;

$(ii)$ $E(X : X \in M(a|\ga))=a$ for all $a\in \ga$,

where $X$ is a random variable with distribution $P$, and $E(X : X \in M(a|\ga))$ represents the conditional expectation of the random variable $X$ given that $X$ takes values in $M(a|\ga)$.
\end{defi}
For details about CVT and its application one can see \cite{DFG}. If $\ga$ is a finite set, the error $\int \min_{a \in \ga} \|x-a\|^2 dP(x)$ is often referred to as the \tit{cost,} or \tit{distortion error} for $\ga$ with respect to the probability measure $P$, and is denoted by $V(\ga):= V(P; \ga)$. On the other hand, $\inf\set{V(P; \ga) :\alpha \subset \mathbb R^d, \text{ card}(\alpha) \leq n}$ is called the \tit{$n$th quantization error} for the probability measure $P$, and is denoted by $V_n:=V_n(P)$. If $\int \| x\|^2 dP(x)<\infty$, then there is some set $\ga$ for which the infimum is achieved (see \cite{ GKL, GL1, GL2}). Such a set $\ga$ for which the infimum occurs and contains no more than $n$ points is called an \tit{optimal set of $n$-means}. Elements of an optimal set of $n$-means are called \tit{optimal quantizers}.  In some literature it is also refereed to as \tit{principal points} (see \cite{MKT}, and the references therein). To see some work on optimal sets of $n$-means one is refereed to \cite{DR, GL3, R1, R2, RR}. It is known that for a continuous probability measure an optimal set of $n$-means always has exactly $n$-elements (see \cite{GL2}). For a Borel probability measure $P$ on $\D R^d$, an optimal set of $n$-means forms a CVT with $n$-means ($n$-generators) of $\D R^d$; however, the converse is not true in general (see \cite{DFG, R}). A CVT with $n$-means is called an \tit{optimal CVT with $n$-means} if the generators of the CVT form an optimal set of $n$-means with respect to the probability distribution $P$. Let us now state the following proposition (see \cite[Chapter~6 and Chapter~11]{GG} and \cite[Section~4.1]{GL2}).
\begin{prop} \label{prop10}
Let $\ga$ be an optimal set of $n$-means for a continuous Borel probability measure $P$ on $\D R^d$. Let $a \in \ga$ and $M (a|\ga)$ be the Voronoi region generated by $a\in \ga$.
Then, for every $a \in\ga$,
$(i)$ $P(M(a|\ga))>0$, $(ii)$ $ P(\partial M(a|\ga))=0$, $(iii)$ $a=E(X : X \in M(a|\ga))$, and $(iv)$ $P$-almost surely the set $\set{M(a|\ga) : a \in \ga}$ forms a Voronoi partition of $\D R^d$.
\end{prop}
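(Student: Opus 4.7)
The plan is to follow the classical quantization-theoretic arguments referenced after the proposition (see \cite{GG, GL2}), deducing each of (i)--(iv) from the fact that $\ga$ minimizes the distortion functional among sets of cardinality at most $n$. The key tool in each case is a perturbation of $\ga$ that would strictly lower the distortion unless the claimed property already holds.

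First I would prove (i) by contradiction. If $P(M(a_0|\ga))=0$ for some $a_0\in\ga$, then $\ga':=\ga\setminus\set{a_0}$ has cardinality $n-1$ and $V(\ga')=V(\ga)=V_n(P)$. Since $P$ is continuous with finite second moment and its support is not contained in the finite set $\ga'$ (the optimal cardinality being exactly $n$ for a continuous measure, by \cite{GL2}), one can choose $a'\in\D R^d$ in the interior of some Voronoi region of $\ga'$ near which $P$ has positive mass and verify that $\ga'\cup\set{a'}$ attains strictly smaller cost, contradicting the optimality of $\ga$. Then for (iii), assuming (i) so that $c:=E(X\mid X\in M(a|\ga))$ is well-defined, I would invoke the parallel-axis identity
\[
\int_{M(a|\ga)}\|x-a\|^2\,dP(x)=\int_{M(a|\ga)}\|x-c\|^2\,dP(x)+P(M(a|\ga))\,\|a-c\|^2,
\]
valid for every Borel set of positive $P$-measure. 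Replacing $a$ by $c$ in $\ga$ and integrating against the same partition yields a cost strictly smaller than $V(\ga)$ unless $a=c$; since the distortion of the modified set evaluated against its own (optimal) Voronoi partition can only be smaller still, optimality of $\ga$ forces $a=c$, which is (iii).

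For (ii), the boundary $\pa M(a|\ga)$ is contained in the finite union of perpendicular bisector hyperplanes $\setm{x\in\D R^d}{\|x-a\|=\|x-b\|}$ for $b\in\ga\setminus\set a$. In the setting of this paper $d=1$, so $\pa M(a|\ga)$ consists of at most two midpoints and is automatically $P$-null by the non-atomicity of the Cantor distribution. In general dimension, one argues that the distortion is insensitive to how boundary points are assigned between adjacent cells, so $V(\ga)$ may be computed using either closed or open Voronoi cells, and combining this with optimality of $\ga$ forces the two to agree up to a $P$-null set. Assertion (iv) then follows: define $A_a$ from $M(a|\ga)$ by a fixed Borel rule distributing the boundary points among adjacent cells disjointly; by (ii), $A_a=M(a|\ga)$ $P$-almost surely, producing the Voronoi partition.

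The main obstacle I anticipate is the careful interplay between (i) and (iii): the centroid identity presupposes $P(M(a|\ga))>0$, so (i) must be proved first; conversely, the strict-decrease step in the perturbation argument for (i) requires some structural input on $P$ (non-atomicity together with the fact that the support is infinite). For the Cantor distribution studied in this paper, these inputs are all immediate, and the argument essentially reduces to invoking the classical propositions in \cite{GG, GL2}.
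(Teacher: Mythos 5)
The paper does not actually prove this proposition; immediately before stating it, the author writes ``(see \cite[Chapter~6 and Chapter~11]{GG} and \cite[Section~4.1]{GL2})'' and takes the result as standard background. So there is no in-paper argument to compare against, and your proposal is supplying a proof where the paper supplies a citation.

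Your sketches of (i) and (iii) are correct and follow the classical route: remove a zero-mass generator and re-insert a better point to contradict optimality for (i), and the parallel-axis (bias-variance) identity combined with the fact that a Voronoi partition only improves the cost for (iii). Your observation that (i) must precede (iii) because the conditional mean needs a positive-mass cell is also the right dependency order.

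The one genuine gap is the general-$d$ argument for (ii). It is true that the distortion is insensitive to how bisector points are assigned between adjacent cells, but that alone cannot force $P(\pa M(a|\ga))=0$: a non-atomic $P$ in $\D R^d$, $d\geq 2$, can put positive mass on a bisector hyperplane, and the open-cell and closed-cell distortions would still coincide. The classical proof instead leverages (iii): if the bisector $H$ between $a$ and $b$ carried positive $P$-mass inside $M(a|\ga)$, one could reassign $H\cap M(a|\ga)$ to $b$'s cell without changing the cost, but this shifts the centroid of $a$'s (now smaller) cell away from $a$; replacing $a$ by the new centroid then strictly lowers the cost, contradicting optimality. For $d=1$, which is all the paper uses, your shortcut via non-atomicity of the Cantor distribution is perfectly adequate, since the boundary of each cell is finitely many points. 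With (ii) in place, your derivation of (iv) is routine.
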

Let $C$ be the Cantor set generated by the contractive similarity mappings $f_1$ and $f_2$ given by $f_1(x)=\frac 13x $ and $f_2(x)=\frac 13 x+\frac 23$ for all $x\in \D R$. Then, $C$ satisfies \[C=\UU_{\go \in \set{1, 2}^{\infty}} \II_{n=1}^\infty f_{\go|_n}([0, 1]),\]
where for $\go:=\go_1\go_2\cdots \in \set{1, 2}^\infty$, $f_{\go|_n}=f_{\go_1}\circ f_{\go_2}\circ \cdots \circ f_{\go_n}$. Notice that for any $\go:=\go_1\go_2\cdots \in \set{1, 2}^\infty$ and $n\in \D N$, $\go|_n:=\go_1\go_2\cdots \go_n\in \set{1, 2}^\ast$, where $\set{1, 2}^\ast$ denotes the set of all words over the alphabet $\set{1, 2}$ including the empty word $\es$.
Define $\mu:=\frac 12 \mu \circ f_1^{-1}+\frac 12 \mu\circ f_2^{-1}$. Then, $\mu$ is a self-similar measure on $\D R$ such that $\mu(C)=1$.
\begin{defi}  \label{def1} For $n\in \D N$ with $n\geq 2$ let $\ell(n)$ be the unique natural number with $2^{\ell(n)} \leq n<2^{\ell(n)+1}$. For $I\sci \set{1, 2}^{\ell(n)}$ with card$(I)=n-2^{\ell(n)}$ let $\gb_n(I)$ be the set consisting of all midpoints $a_\gs$ of intervals $f_\gs([0, 1])$ with $\gs \in \set{1,2}^{\ell(n)} \setminus I$ and all midpoints $a_{\gs 1}$, $a_{\gs 2}$ of the basic intervals of $f_\gs([0, 1])$ with $\gs \in I$. Formally, $\gb_n(I)=\set{a_\gs : \gs \in \set{1,2}^{\ell(n)} \setminus I} \uu \set{a_{\gs 1} : \gs \in I} \uu \set {a_{\gs 2} : \gs \in I}.$
\end{defi}
In \cite{GL3}, Graf and Luschgy showed that $\gb_n(I)$ forms an optimal set of $n$-means for the probability distribution $\mu$, and the $n$th quantization error is given by
\[V (\gb_n(I))=\frac{1}{18^{\ell(n)}}\cdot \frac 18 \Big(2^{\ell(n)+1}-n+\frac 19(n-2^{\ell(n)})\Big).\]
Let $\set{S_j}_{j=1}^\infty$ be an infinite collection of contractive similitudes on $\D R$ such that $S_j(x)=\frac 1 {3^j} x+1-\frac{1}{3^{j-1}}$ for all $x\in\D R$ and all $j\in \D N$. $S_j$ has the similarity ratio $s_j$, where $s_j=\frac 1{3^j}$, for each $j\in \D N$. Let $\pi$ be the coding map from the symbol or coding space $\D N^\infty$ into $[0, 1]$ such that
\[\set{\pi(\go)}=\II_{n=1}^\infty S_{\go|_n}([0, 1]),\]
where for $\go:=\go_1\go_2\cdots \in \D N^\infty$, $\go|_n:=\go_1\go_2\cdots\go_n$, and $S_{\go|_n}=S_{\go_1}\circ S_{\go_2}\circ \cdots \circ S_{\go_n}$.
Then, $\pi$ is a continuous map from the coding space onto the set $J$ given by
\[J:=\pi(\D N^\infty)=\UU_{\go \in \D N^\infty} \II_{n=1}^\infty S_{\go|_n}([0, 1]),\]
which is called the limit set of the infinite iterated function system (IFS) $\set{S_j}_{j=1}^\infty$.
Observe that $J$ satisfies the natural invariance equality: $J=\uu_{j=1}^\infty S_j(J)$. Due to an infinite iterated function system, the limit set $J$ is not necessarily compact (see \cite{HMU}).
Let $(\frac 12, \frac 1{2^2},\frac 1{2^3}, \cdots)$ be the probability vector associated with the infinite IFS $\set{S_j}_{j=1}^\infty$. Then, there exists a unique Borel probability measure $P$ on $\D R$, actually on $J$, such that $P=\sum_{j=1}^\infty \frac 1 {2^j} P\circ S_j^{-1}$ (see \cite{RM}).
For each $j\in \D N$, we have $S_j(x)=(f_2^{(j-1)}\circ f_1)(x)$ for $x\in \D R$, where $f_1$ and $f_2$ are the two similarity mappings generating the dyadic Cantor set $C$ as defined before, and $f_2^{(j-1)}$ denotes the $j-1$ iteration of the mapping $f_2$. Therefore, by iterated application of some of the $S_j$ we can generate the mappings $f_\go:=f_{\go_1}\circ f_{\go_2}\circ \cdots \circ f_{\go_n}$ for all finite words $\go:=\go_1\go_2\cdots \go_n\in \set{1, 2}^\ast$ except of those where $\go_n=2$. Because of the contraction property, we have
\[\lim_{n\rightarrow \infty}f_{\go_1\go_2\cdots  \go_n}([0,1])=\lim_{n\to \infty}f_{w_1\ldots  w_{n-1}2}([0,1]),\]
i.e., the limit does not depend on the last letter $\go_n$. Therefore, the infinite coding mappings agree with those of the Cantor set and thus, we get $J=C$. Again, for any $j\in \D N$,
\[P(S_j([0, 1]))=\frac 1 {2^j}, \te{ and } \mu(f_2^{(j-1)}\circ f_1([0, 1]))=\frac 1{2^{j-1}} \frac 1 2=\frac 1{2^j}.\]
Thus, we see that $P=\mu$. Hence, we can say that for any $n\in \D N$, the optimal sets of $n$-means for $P$ are same as the optimal sets of $n$-means for $\mu$.  In this paper, using the infinite system of similarity mappings $\set{S_j : j\in \D N$} associated with the probability vector $(\frac 12, \frac 1{2^2}, \cdots)$, for all $n\in \D N$, we determine the optimal sets of $n$-means and the $n$th quantization errors for the infinite self-similar measure $P$. Due to infinite number of mappings the technique used in this paper is completely different from the technique used by Graf-Luschgy in \cite{GL3}. For $j\in \D N$, let $r_j\in (0, \frac 12)$. Let $\set{S_j}_{j=1}^\infty$ be a collection of contractive similarity mappings on $\D R$ such that for $x\in \D R$,
\[S_j(x)=\left\{\begin{array}{ll}
r_1 x &\te{ if } j=1,\\
r_1r_2\cdots r_j x+ 1-r_1r_2\cdots r_{j-1} & \te{ if } j\geq 2.
\end{array}
\right.
\]
Let $(p_1, p_2, \cdots)$ be a probability vector with $p_j>0$ for all $j\in \D N$. Then, for the infinite self-similar measure $P$ given by $P=\sum_{j=1}^\infty p_j P\circ S_j^{-1}$ the optimal sets of $n$-means and the $n$th quantization errors are not known yet for all infinite probability vectors $(p_1, p_2, \cdots)$ and all $0<r_j<\frac 12$. The technique developed in this paper can be used to investigate the optimal sets of $n$-means and the $n$th quantization errors for such a more general infinite self-similar measure $P$.

\section{Basic definitions, lemmas and proposition}

Let $\D N$ denote the set all natural numbers, i.e., $\D N=\set{1, 2, \cdots}$.
By a \textit{string} or a \textit{word} $\go$ over the alphabet $\D N$, we mean a finite sequence $\go:=\go_1\go_2\cdots \go_k$
of symbols from the alphabet, where $k\geq 1$, and $k$ is called the length of the word $\go$. The length of a word $\go$ is denoted by $|\go|$.  A word of length zero is called the \textit{empty word}, and is denoted by $\emptyset$. We denote the set of all words of length $k$ by $\D N^k$. By $\D N^*$ we denote the set of all words
over the alphabet $\D N$ of some finite length $k$ including the empty word $\es$. For any two words $\go:=\go_1\go_2\cdots \go_k$ and
$\tau:=\tau_1\tau_2\cdots \tau_\ell$ in $\D N^*$,
$\go\tau:=\go_1\cdots \go_k\tau_1\cdots \tau_\ell$ is the
concatenation of the words $\go$ and $\tau$. For $n\geq 1$ and $\go=\go_1\go_2\cdots\go_n\in \D N^\ast$ we define $\go^-:=\go_1\go_2\cdots\go_{n-1}$, i.e., $\go^-$ is the word obtained from the word $\go$ by deleting the last letter of $\go$. Notice that $\go^-$ is the empty word if the length of $\go$ is one. For $\go \in \D N^\ast$, by $(\go, \infty)$ it is meant the set of all words $\go^-(\go_{|\go|}+j)$, obtained by concatenation of the word $\go^-$ with the word $\go_{|\go|}+j$ for $j\in \D N$, i.e.,
\[(\go, \infty)=\set{\go^-(\go_{|\go|}+j) : j\in \D N}.\]
Write $P:=\sum_{j=1}^\infty  p_j P\circ S_j^{-1}$,  where $p_j=\frac 1{2^j}$ for all $j\in \D N$ and $\set{S_j}_{j=1}^\infty$ is the infinite collection of similitudes with similarity ratios $s_j:=\frac 1 {3^j}$ for $j\in \D N$ as defined in the previous section. Then, $P$ has support lying in the closed interval $[0, 1]$. This paper deals with this probability measure $P$. For $\go=\go_1\go_2\cdots\go_n\in \D N^n$, write
\[S_\go:=S_{\go_1}\circ\cdots \circ S_{\go_n}, \quad J_\go:=S_\go(J), \quad s_\go:=s_{\go_1}\cdots s_{\go_n}, \quad p_\go:=p_{\go_1}\cdots p_{\go_n},\]
where $J:=J_\es=[0, 1]$. If $\go$ is the empty word $\es$, then $S_\go$ represents the identity mapping on $\D R$, and $p_\go=1$. Then, for any $\go \in \D N^\ast$, we write
\[J_{(\go,\infty)}:=\mathop{\uu}\limits_{j=1}^\infty J_{{\go^-(\go_{|\go|}+j)}} \te{ and }      p_{(\go, \infty)}:=P(J_{(\go, \infty)})=\sum_{j=1}^\infty P(J_{\go^-(\go_{|\go|}+j)})=\sum_{j=1}^\infty p_{\go^-(\go_{|\go|}+j)}.\]
Notice that for any $k\in \D N$, $p_{(k, \infty)}=1-\sum_{j=1}^kp_j$, and for any word $\go \in \D N^\ast$, $p_{(\go, \infty)}=p_{\go^-}-p_\go$.

\begin{lemma} \label{lemma1}
Let $f : \mathbb R \to \mathbb R^+$ be Borel measurable and $k\in \mathbb N$. Then,
\[\int f(x) dP(x)=\sum_{\go \in \D N^k}p_\go \int (f \circ S_\go)(x) dP(x).\]
\end{lemma}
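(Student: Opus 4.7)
The plan is to prove the identity by induction on $k$, with the base case $k=1$ following directly from the self-similarity equation $P=\sum_{j=1}^\infty p_j\, P\circ S_j^{-1}$.

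For the base case, I would first establish the identity when $f$ is the indicator function $\mathbf 1_B$ of a Borel set $B$. In that case, the defining equation for $P$ gives
\[
\int \mathbf 1_B\, dP = P(B) = \sum_{j=1}^\infty p_j\, P(S_j^{-1}(B)) = \sum_{j=1}^\infty p_j \int \mathbf 1_{S_j^{-1}(B)}\, dP = \sum_{j=1}^\infty p_j \int (\mathbf 1_B\circ S_j)\, dP.
\]
By linearity this extends to nonnegative simple functions, and then by monotone convergence it extends to arbitrary Borel measurable $f:\D R\to\D R^+$. The interchange of the integral with the infinite sum is legitimate because all integrands are nonnegative, so Tonelli applies.

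For the inductive step, suppose the identity holds for some $k\ge 1$:
\[
\int f\, dP = \sum_{\go\in\D N^k} p_\go \int (f\circ S_\go)\, dP.
\]
Apply the $k=1$ case to each function $f\circ S_\go$ (which is again a nonnegative Borel measurable function on $\D R$):
\[
\int (f\circ S_\go)\, dP = \sum_{j=1}^\infty p_j \int (f\circ S_\go\circ S_j)\, dP = \sum_{j=1}^\infty p_j \int (f\circ S_{\go j})\, dP.
\]
Since $p_{\go j}=p_\go p_j$ and $S_{\go j}=S_\go\circ S_j$, substituting yields
\[
\int f\, dP = \sum_{\go\in\D N^k}\sum_{j=1}^\infty p_{\go j} \int (f\circ S_{\go j})\, dP = \sum_{\tau\in\D N^{k+1}} p_\tau \int (f\circ S_\tau)\, dP,
\]
which is the desired identity at level $k+1$. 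Rearrangement of the double sum over $\D N^k\times\D N$ into a sum over $\D N^{k+1}$ is again justified by Tonelli on the counting measure since all terms are nonnegative.

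The only subtle point is the interchange of integration with the infinite sum in the base case, but as noted above the nonnegativity of $f$ reduces this to Tonelli's theorem, so there is no real obstacle. Everything else is bookkeeping with the multiplicative structure of $p_\go$ and $S_\go$.
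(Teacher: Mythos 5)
Your proof is correct and follows the same route as the paper: the base case is the defining self-similarity identity $P=\sum_j p_j\,P\circ S_j^{-1}$, and iterating this gives $P=\sum_{\go\in\D N^k}p_\go\,P\circ S_\go^{-1}$, which yields the lemma. The paper states this induction in one line; you have simply spelled out the standard measure-theoretic bootstrapping (indicators, simple functions, monotone convergence) and the Tonelli justification that the paper leaves implicit.
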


\begin{proof}
We know $P=\sum_{j=1}^\infty p_j P\circ S_j^{-1}$, and so by induction $P= \sum_{\go \in \D N^k} p_\go P\circ S_\go^{-1}$, and thus the lemma is established.
\end{proof}

\begin{lemma} \label{lemma2} Let $X$ be a random variable with probability distribution $P$. Then, the expectation $E(X)$ and the variance $V:=V(X)$ of the random variable $X$ are given by
\[E(X)=\frac 12   \text{ and } V=\frac 18.\]
\end{lemma}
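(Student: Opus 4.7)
My plan is to use Lemma \ref{lemma1} with $k=1$ as a self-similarity identity, apply it first with $f(x)=x$ to obtain $E(X)$, and then apply it with $f(x)=(x-\tfrac12)^2$ to obtain $V$. In both cases Lemma \ref{lemma1} gives a linear equation in the unknown (since $S_j$ is affine), whose coefficient is a convergent geometric series, so everything reduces to summing a few explicit geometric series.

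For the mean, write $S_j(x)=\tfrac{1}{3^j}x+(1-\tfrac{1}{3^{j-1}})$. Lemma \ref{lemma1} with $k=1$ and $f(x)=x$ gives
\[
E(X)=\sum_{j=1}^\infty \frac{1}{2^j}\Bigl(\frac{1}{3^j}E(X)+1-\frac{1}{3^{j-1}}\Bigr)
= E(X)\sum_{j=1}^\infty\frac{1}{6^j}+\sum_{j=1}^\infty\frac{1}{2^j}-3\sum_{j=1}^\infty\frac{1}{6^j}.
\]
Using $\sum_{j\ge 1}\tfrac{1}{6^j}=\tfrac15$ and $\sum_{j\ge 1}\tfrac{1}{2^j}=1$, this becomes $E(X)=\tfrac15 E(X)+\tfrac25$, giving $E(X)=\tfrac12$.

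For the variance, once the mean is known the cleanest move is to apply Lemma \ref{lemma1} with $f(x)=(x-\tfrac12)^2$. Setting $c_j:=S_j(\tfrac12)=1-\tfrac{5}{2\cdot 3^j}$, the affine identity $S_j(x)-\tfrac12=\tfrac{1}{3^j}(x-\tfrac12)+(c_j-\tfrac12)$ combined with $E(X-\tfrac12)=0$ eliminates the cross term, so
\[
V=\sum_{j=1}^\infty\frac{1}{2^j}\Bigl(\frac{1}{9^j}V+\bigl(c_j-\tfrac12\bigr)^2\Bigr)
= V\sum_{j=1}^\infty\frac{1}{18^j}+\sum_{j=1}^\infty\frac{1}{2^j}\bigl(c_j-\tfrac12\bigr)^2.
\]
Expanding $(c_j-\tfrac12)^2=\tfrac14\bigl(1-\tfrac{10}{3^j}+\tfrac{25}{9^j}\bigr)$ and using $\sum\tfrac{1}{18^j}=\tfrac{1}{17}$, $\sum\tfrac{1}{6^j}=\tfrac15$, the constant term evaluates to $\tfrac14(1-2+\tfrac{25}{17})=\tfrac{2}{17}$, and the equation $V=\tfrac{V}{17}+\tfrac{2}{17}$ yields $V=\tfrac18$.

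The only thing that could go wrong is a bookkeeping slip in the geometric sums or the translation part $1-\tfrac{1}{3^{j-1}}$; I expect no essential obstacle since the self-similarity relation forces everything to close up into a single linear equation in each case. I would double-check the computation of $c_j$ and the cancellation of the cross term as the two most error-prone spots.
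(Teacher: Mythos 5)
Your proof is correct and follows essentially the same self-similarity route as the paper: apply Lemma~\ref{lemma1} with $k=1$, first to $f(x)=x$ to get a linear equation for $E(X)$, then to a quadratic to get a linear equation for the second moment. The only difference is a minor computational simplification: the paper expands $E(X^2)$ directly and then subtracts $(E(X))^2$, whereas you work with $f(x)=(x-\tfrac12)^2$ so that the cross term vanishes and the linear equation in $V$ appears immediately; this is slightly tidier but not a different method.
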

\begin{proof} Using Lemma~\ref{lemma1}, we have
\begin{align*}
&E(X)=\int x dP(x)=\sum_{j=1}^\infty \frac 1{2^j} \int S_j(x) dP(x)=\sum_{j=1}^\infty \frac 1{2^j} \int\Big(\frac 1{3^j} x+1-\frac 1{3^{j-1}}\Big) dP(x)\\
&=\sum_{j=1}^\infty \Big(\frac 1{6^j} E(X)+\frac 1{2^j}(1 -\frac1{3^{j-1}})\Big)= \frac 1 5 E(X)+1-\frac 35,
\end{align*}
which implies $E(X)=\frac 12$. Now,
\begin{align*}
&E(X^2)=\int x^2 dP(x)=\sum_{j=1}^\infty \frac 1{2^j} \int\Big(\frac 1{3^j} x+1-\frac 1{3^{j-1}}\Big)^2 dP(x)\\
&=\sum_{j=1}^\infty \frac 1{2^j} \int \Big(\frac 1{9^j} x^2 + \frac 2{3^j} (1-\frac{1}{3^{j-1}}) x+(1-\frac 1{3^{j-1}})^2 \Big) dP(x).
\end{align*}
Since,
\[\sum_{j=1}^\infty \frac{1}{18^j}=\frac 1{17} \te{ and }  \sum_{j=1}^\infty \frac{1}{2^j}\int \frac 2{3^j} (1-\frac{1}{3^{j-1}})  xdP=\frac 2{85}, \te{ and } \sum_{j=1}^\infty \frac{1}{2^j} (1-\frac 1{3^{j-1}})^2=\frac{28}{85},\]
we have
$E(X^2)=\frac 1{17}E(X^2)+\frac 2{85}+\frac{28}{85}$  which yields $E(X^2)=\frac 3{8}$. Thus,
\[V=E(X^2)-\left(E(X)\right)^2 =\frac 38 -\frac 14=\frac 18,\]
which is the lemma.
\end{proof}

\begin{lemma} \label{lemma3}
For any $k\geq 1$, we have
\[E(X | X \in J_k\uu J_{k+1}\uu \cdots)=1-\frac 12 \frac{1}{3^{k-1}}.\]
\end{lemma}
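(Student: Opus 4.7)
My plan is to compute the conditional expectation by handling the denominator $P(J_k\cup J_{k+1}\cup\cdots)$ and the numerator $\int_{J_k\cup J_{k+1}\cup\cdots} x\,dP(x)$ separately, and then take the ratio. The key point is that the pieces $J_j$ are pairwise disjoint up to a set of $P$-measure zero (their intersections lie among the finitely many common endpoints of the intervals $S_j([0,1])$), so everything splits into a sum over $j\ge k$.

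For the denominator, the self-similarity relation $P=\sum_{j=1}^\infty p_j\,P\circ S_j^{-1}$ together with the essential disjointness gives $P(J_j)=p_j=2^{-j}$, so
\[
P(J_k\cup J_{k+1}\cup\cdots)=\sum_{j=k}^\infty \frac{1}{2^{j}}=\frac{1}{2^{k-1}}.
\]
For the numerator, I would use Lemma~\ref{lemma1} (with $k=1$) restricted to the component $J_j$: the same essential-disjointness argument shows that the restriction of $P$ to $J_j$, normalized, coincides with the pushforward $P\circ S_j^{-1}$, so
\[
\int_{J_j} x\,dP(x)=p_j\int S_j(x)\,dP(x)=p_j\Bigl(\frac{1}{3^j}E(X)+1-\frac{1}{3^{j-1}}\Bigr).
\]
Plugging in $E(X)=\tfrac12$ from Lemma~\ref{lemma2} and summing over $j\ge k$, I get two geometric series:
\[
\sum_{j=k}^\infty \frac{1}{2^j}\Bigl(\frac{1}{2\cdot 3^j}+1-\frac{1}{3^{j-1}}\Bigr)=\frac{1}{2^{k-1}}+\Bigl(\tfrac12-3\Bigr)\sum_{j=k}^\infty \frac{1}{6^j}=\frac{1}{2^{k-1}}-\frac{1}{2\cdot 6^{k-1}}.
\]
Dividing by $2^{-(k-1)}$ yields $1-\tfrac{1}{2\cdot 3^{k-1}}$, which is the desired formula.

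A more conceptual alternative, worth mentioning but not strictly needed, is the factorization $S_j=f_2^{(j-1)}\circ f_1$ noted in the introduction: since $J=\bigcup_{j\ge 1}J_j$ up to a null set, applying $f_2^{(k-1)}$ gives $J_k\cup J_{k+1}\cup\cdots=f_2^{(k-1)}(J)$ up to a null set, and the normalized restriction of $P$ equals $P\circ (f_2^{(k-1)})^{-1}$. The conditional expectation is then $f_2^{(k-1)}(E(X))=f_2^{(k-1)}(\tfrac12)$, and since $f_2^{(k-1)}(x)=\tfrac{x}{3^{k-1}}+1-\tfrac{1}{3^{k-1}}$, the answer pops out as $1-\tfrac{1}{2\cdot 3^{k-1}}$. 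There is no real obstacle in the proof; the only care needed is the bookkeeping of the two geometric sums (or, in the alternative approach, verifying that the normalized restriction of $P$ is indeed the pushforward under $f_2^{(k-1)}$).
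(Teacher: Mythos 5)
Your proof is correct and essentially the same as the paper's: both compute the conditional expectation as the ratio of $\sum_{j\ge k}p_j\,S_j(\tfrac12)$ to $\sum_{j\ge k}p_j$, using $\int_{J_j}x\,dP=p_j\,S_j(E(X))$ with $E(X)=\tfrac12$, and then evaluate the resulting geometric series. Your alternative route via the factorization $S_j=f_2^{(j-1)}\circ f_1$ and the identity $J_k\cup J_{k+1}\cup\cdots=f_2^{(k-1)}(J)$ is a tidy shortcut that avoids the series bookkeeping, but it is not what the paper does and you only sketch it.
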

\begin{proof} By the definition of conditional expectation, we have
\begin{align*}
&E(X | X \in J_k\uu J_{k+1}\uu \cdots)=\frac {1}{\sum_{j=k}^\infty \frac 1{2^j}} \Big(\sum_{j=k}^\infty \frac 1 {2^j} S_j(\frac 12)\Big)=2^{k-1} \sum_{j=k}^\infty \frac 1 {2^j}(1-\frac 5 2 \frac 1{3^{j}})=1-\frac 12 \frac 1{3^{k-1}},
\end{align*}
which is the lemma.
\end{proof}

Now, the following remarks are in order.

\begin{remark} \label{remark1} For $k\in \D N$, we have
$S_k(\frac 12)=\frac 1{3^k}\frac 12 +1-\frac 1{3^{k-1}}=1-\frac 5 2 \frac 1{3^{k}}.$ Thus, by Lemma~\ref{lemma3}, for $k\in \D N$,
\begin{equation*}  E(X | X \in J_k\uu J_{k+1}\uu \cdots)=S_k(\frac 12) +\frac 1{3^k}=\frac 12(S_k(1)+S_{k+1}(0)).
\end{equation*}   For any $x_0 \in \D R$, we have
$\int(x-x_0)^2 dP(x) =V(X)+(x_0-E(X))^2$, and so, the optimal set of one-mean is the expected value and the corresponding quantization error is the variance $V$ of the random variable $X$. For $\go \in \D N^k$, $k\geq 1$, using Lemma~\ref{lemma1}, we have
\begin{align*}
&E(X : X \in J_\go) =\frac{1}{P(J_\go)} \int_{J_\go} xdP(x)=\int_{J_\go} x  dP\circ S_\go^{-1}(x)=\int S_\go(x)  dP(x)=E(S_\go(X)).
\end{align*}
Since $S_j$ are similitudes, we have $E(S_j(X))=S_j(E(X))$ for $j\in \D N$, and so by induction, $E(S_\go(X))=S_\go(E(X))$
 for $\go\in \D N^k$, $k\geq 1$.
\end{remark}

\begin{remark}

For words $\gb, \gg, \cdots, \gd$ in $\D N^\ast$, by $a(\gb, \gg, \cdots, \gd)$ we denote the conditional expectation of the random variable $X$ given $J_\gb\uu J_\gg \uu\cdots \uu J_\gd,$ i.e.,
\begin{equation*}  a(\gb, \gg, \cdots, \gd)=E(X|X\in J_\gb \uu J_\gg \uu \cdots \uu J_\gd)=\frac{1}{P(J_\gb\uu \cdots \uu J_\gd)}\int_{J_\gb\uu \cdots \uu J_\gd} x dP(x).
\end{equation*}
Thus by Remark~\ref{remark1}, for $\go\in \D N^\ast$, we have
\begin{align} \label{eq1} \left\{ \begin{array}{ll} a(\go)=S_\go(E(X))=S_\go(\frac 12), \te{ and  } & \\
 a(\go, \infty)=E(X|X\in J_{\go^-(\go_{|\go|}+1)}\uu J_{\go^-(\go_{|\go|}+2)}\uu \cdots)=S_{\go^-(\go_{|\go|}+1)}(\frac 12)+s_{\go^-(\go_{|\go|}+1)}.&
 \end{array} \right.
 \end{align}

Moreover, for any  $\go \in \D N^\ast$   and $j\geq 1$, since $p_{\go^-(\go_{|\go|}+j)}=p_{\go^-} p_{\go_{|\go|}+j}=p_{\go^-} p_{\go_{|\go|}}p_j=p_\go p_j=p_{\go j}$,
and similarly $s_{\go^-(\go_{|\go|}+j)}=s_\go s_j=s_{\go j}$, for any $x_0\in \D R$, we have
\begin{align} \label{eq2} \left\{ \begin{array}{ll}\int_{J_\go}(x-x_0)^2 dP(x)=p_\go\int (x -x_0)^2 dP\circ S_\go^{-1}(x)=p_\go  \Big(s_\go^2V+(S_\go(\frac 12)-x_0)^2\Big),\te{ and } &\\
\int_{J_{(\go, \infty)}} (x -x_0)^2  dP(x)=\sum_{j=1}^\infty p_{\go j} \Big(s_{\go j}^2 V + (S_{\go^-(\go_{|\go|}+j)}(\frac 12)-x_0)^2\Big).
\end{array} \right.
 \end{align}
In the sequel, for  $\go \in \D N^k$, $k\geq 1$, by $E(a(\go))$, we mean the error contributed by $a(\go)$ in the region $J_\go$; and similarly, by $E(a(\go,\infty))$, it is meant the error contributed by $a(\go,\infty)$ in the region $J_{(\go, \infty)}$. We apologize for any abuse in notation. Thus, we have
\begin{align}\label{eq43}
E(a(\go)):=\int_{J_{\go}}(x-a(\go))^2 dP(x) \te{ and } E(a(\go, \infty)):=\int_{J_{(\go, \infty)}}(x-a(\go, \infty))^2 dP(x).
\end{align}
\end{remark}

We now prove the following lemma.
\begin{lemma} \label{lemma4} Let $\go \in \D N^\ast$. Let $E(a(\go))$ and $E(a(\go, \infty))$ be defined by \eqref{eq43}. Then,
\[E(a(\go))=E(a(\go, \infty))=p_\go s_\go^2 V.\]
\end{lemma}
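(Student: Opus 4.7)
The identity $E(a(\go))=p_\go s_\go^2 V$ falls out of the first line of \eqref{eq2} after substituting $x_0=a(\go)=S_\go(\tfrac12)$: the bias term vanishes and $p_\go s_\go^2 V$ remains. The substance of the lemma is that the same value is obtained for $E(a(\go,\infty))$, and my plan is to prove this by exhibiting $J_{(\go,\infty)}$ as a similar copy of the whole attractor $J$ that carries a scaled copy of $P$.

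The cornerstone is the commutation $f_2\circ S_j=S_{j+1}$, checked directly from the formulas, which by induction gives $S_{k+j}=f_2^{\,k}\circ S_j$ for all $k\geq 0$ and $j\in\D N$. Set $T:=S_{\go^-}\circ f_2^{\,\go_{|\go|}}$, a similitude of ratio $s_{\go^-}\cdot 3^{-\go_{|\go|}}=s_\go$. Combining the commutation with the attractor identity $J=\UU_{j=1}^\infty S_j(J)$ yields
\[
J_{(\go,\infty)}=\UU_{j=1}^\infty S_{\go^-}\big(S_{\go_{|\go|}+j}(J)\big)=T(J).
\]
In parallel, the factorizations $S_{\go^-(\go_{|\go|}+j)}=T\circ S_j$ and $p_{\go^-(\go_{|\go|}+j)}=p_\go p_j$, together with $P=\sum_j p_j\,P\circ S_j^{-1}$, give the measure identity
\[
P|_{J_{(\go,\infty)}}=\sum_{j=1}^\infty p_\go p_j\,P\circ S_j^{-1}\circ T^{-1}=p_\go\,P\circ T^{-1}.
\]

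Consequently the conditional distribution of $X$ given $X\in J_{(\go,\infty)}$ is the pushforward $P\circ T^{-1}$. Since $T$ is affine with ratio $s_\go$, a change of variables combined with Lemma~\ref{lemma2} gives $a(\go,\infty)=T(\tfrac12)$ and
\[
E(a(\go,\infty))=p_\go\int\big(T(x)-T(\tfrac12)\big)^2\,dP(x)=p_\go\,s_\go^2\,V,
\]
completing the proof.

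The only conceptual step is recognizing that $J_{(\go,\infty)}$ is itself a similar copy of $J$ (with the same mass $p_\go$ and the same conditional variance $s_\go^2 V$ as $J_\go$); once this picture is in hand the computation is automatic. A brute-force alternative is to substitute $x_0=a(\go,\infty)$ into the second line of \eqref{eq2}, compute the variance piece as $p_\go s_\go^2 V\sum 1/18^j=p_\go s_\go^2 V/17$, and verify that the bias-squared piece equals $2p_\go s_\go^2/17$, so that the two combine to $p_\go s_\go^2(V+2)/17=p_\go s_\go^2/8=p_\go s_\go^2 V$ using $V=\tfrac18$.
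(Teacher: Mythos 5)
Your proposal is correct, and your treatment of $E(a(\go,\infty))$ takes a genuinely different and, I'd say, more illuminating route than the paper. The paper substitutes $x_0=a(\go,\infty)$ into the second line of \eqref{eq2}, simplifies the bias term $S_{\go^-(\go_{|\go|}+j)}(\tfrac 12)-a(\go,\infty)$ to $s_\go\bigl(\tfrac 12-\tfrac 52\tfrac{1}{3^j}\bigr)$ by an explicit algebraic calculation, and then evaluates the resulting series $\sum_{j}\tfrac{1}{2^j}\bigl(\tfrac{1}{9^j}V+(\tfrac 12-\tfrac 52\tfrac{1}{3^j})^2\bigr)=V$; this final equality is precisely the self-similarity identity $V=\tfrac{1}{17}(V+2)$, i.e.\ it secretly uses $V=\tfrac 18$. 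You instead prove the structural facts $f_2\circ S_j=S_{j+1}$, $J_{(\go,\infty)}=T(J)$ with $T:=S_{\go^-}\circ f_2^{\,\go_{|\go|}}$ a similitude of ratio $s_\go$, and $P|_{J_{(\go,\infty)}}=p_\go\,P\circ T^{-1}$ (the last step invoking $P|_{J_\gt}=p_\gt\,P\circ S_\gt^{-1}$ for each $\gt=\go^-(\go_{|\go|}+j)$, which the paper uses implicitly throughout). Then $E(a(\go,\infty))=p_\go s_\go^2V$ falls out by a one-line change of variables with no series and no explicit value of $V$. What your approach buys: it explains \emph{why} the lemma holds ($J_{(\go,\infty)}$, exactly like $J_\go$, is a scaled copy of the attractor with mass $p_\go$ and ratio $s_\go$), it is manifestly independent of the numerical value of $V$, and it generalizes verbatim to any infinite IFS whose maps satisfy the analogous shift-commutation $f_2\circ S_j=S_{j+1}$. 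What the paper's computation buys is self-containedness and avoidance of the measure-theoretic change-of-variables bookkeeping. Both arrive at $p_\go s_\go^2V$, and your brute-force check at the end correctly reproduces the paper's series evaluation.
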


\begin{proof} In the first equation of \eqref{eq2} put $x_0=a(\go)$, and then $E(a(\go))=p_\go s_\go^2 V$. In the second equation of \eqref{eq2}, put $x_0=a(\go, \infty)$, and then
\begin{align} \label{eq36} & E(a(\go, \infty))=\sum_{j=1}^\infty p_{\go j}\Big(s_{\go j}^2 V +(S_{\go^-(\go_{|\go|}+j)}(\frac 12)-a(\go, \infty))^2\Big).
\end{align}
Putting the values of $a(\go, \infty)$ from \eqref{eq1}, we have
\begin{align*}
&S_{\go^-(\go_{|\go|}+j)}(\frac 12)-a(\go, \infty)=S_{\go^-(\go_{|\go|}+j)}(\frac 12)-S_{\go^-(\go_{|\go|}+1)}(\frac 12)-s_{\go^-(\go_{|\go|}+1)}\\
&=s_{\go^-} \Big(S_{\go_{|\go|}+j}(\frac 12)-S_{\go_{|\go|}+1}(\frac 12)-s_{\go^-(\go_{|\go|}+1)}\Big)\\
&=s_{\go^-}\Big(\frac{1}{3^{\go_{|\go|+j}}} \frac 12-\frac{1}{3^{\go_{|\go|+j-1}}}-\frac{1}{2^{\go_{|\go|+1}}} \frac 12+\frac{1}{2^{\go_{|\go|}}}- s_{\go^-(\go_{|\go|}+1)}\Big)\\
&=s_{\go}\Big(\frac{1}{3^{j}} \frac 12-\frac{3}{3^{j}}- \frac 16+1-\frac 13\Big)=s_\go\Big(\frac{1}{2}-\frac{5}{2} \frac 1{ 3^j}\Big).
\end{align*}
Hence, \eqref{eq36} implies
$E(a(\go, \infty)) =p_\go s_\go^2 \sum_{j=1}^\infty \frac {1}{2^j}\Big (\frac 1{9^j} V+(\frac 12-\frac 52 \frac 1{3^j})^2\Big)=p_\go s_\go^2 V.
$
Thus, the proof of the lemma is complete.
\end{proof}

\begin{remark} By \eqref{eq2} and Lemma~\ref{lemma4}, for any $x_0 \in \D R$, we have
\begin{align} \label{eq222} \left\{ \begin{array}{ll}\int_{J_\go}(x-x_0)^2 dP(x)=E(a(\go))+(x_0-a(\go))^2 p_\go,\te{ and } &\\
\int_{J_{(\go, \infty)}} (x -x_0)^2  dP(x)=E(a(\go))+(x_0-a(\go, \infty))^2(p_{\go^-}-p_{\go}).
\end{array} \right.
 \end{align}
Notice that by \eqref{eq1}, we have $a(\go, \infty) =a(\go^-(\go_{|\go|}+1))+s_{\go^-(\go_{|\go|}+1)}$. The expressions \eqref{eq1} and \eqref{eq222}  are useful to obtain the optimal sets and the corresponding quantization errors with respect to the probability distribution $P$.
\end{remark}

The following lemma is useful.

\begin{lemma}\label{lemma5} For any two words $\go, \gt \in \D N^\ast$, if $p_\go=p_\gt$, then
\[\int_{J_\go} (x-a(\go))^2  dP(x)=\int_{J_\gt}(x-a(\gt))^2 dP(x).\]
\end{lemma}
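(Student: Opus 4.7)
The plan is to reduce the statement to Lemma~\ref{lemma4} and then exploit an arithmetic coincidence between the probability weights $p_j$ and the contraction ratios $s_j$. First I would apply Lemma~\ref{lemma4} to both sides: that lemma gives
\[
\int_{J_\omega}(x-a(\omega))^2\, dP(x) = p_\omega s_\omega^2 V \quad \text{and} \quad \int_{J_\tau}(x-a(\tau))^2\, dP(x) = p_\tau s_\tau^2 V.
\]
Since the hypothesis provides $p_\omega = p_\tau$, the task reduces to verifying $s_\omega = s_\tau$.

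For this, I would use the explicit formulas $p_j = 2^{-j}$ and $s_j = 3^{-j}$. Writing $\omega = \omega_1\omega_2\cdots\omega_m$ and $\tau = \tau_1\tau_2\cdots\tau_n$, the multiplicative definitions of $p_\omega$ and $s_\omega$ yield
\[
p_\omega = 2^{-(\omega_1+\cdots+\omega_m)}, \quad s_\omega = 3^{-(\omega_1+\cdots+\omega_m)},
\]
and similarly for $\tau$. Thus the common quantity $\omega_1+\cdots+\omega_m$ simultaneously determines $p_\omega$ and $s_\omega$. So $p_\omega = p_\tau$ forces $\omega_1+\cdots+\omega_m = \tau_1+\cdots+\tau_n$, which in turn forces $s_\omega = s_\tau$. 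Substituting back gives $p_\omega s_\omega^2 V = p_\tau s_\tau^2 V$, which is the desired equality.

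There is no real obstacle; the lemma works because the weight sequence $\{2^{-j}\}$ and the ratio sequence $\{3^{-j}\}$ are both governed by the same additive index. The only point worth flagging is that $\omega$ and $\tau$ may have different lengths, but this is harmless: both $p_\omega$ and $s_\omega$ depend only on the sum of the letters of the word, not on the length or the individual letters themselves.
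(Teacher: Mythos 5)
Your proof is correct and follows essentially the same route as the paper: apply Lemma~\ref{lemma4} to write both distortion errors as $p_\go s_\go^2 V$ and $p_\gt s_\gt^2 V$, observe that $p_\go = 2^{-(\go_1+\cdots+\go_k)}$ and $s_\go = 3^{-(\go_1+\cdots+\go_k)}$ are both determined by the same letter-sum, so $p_\go=p_\gt$ forces $s_\go=s_\gt$, and conclude. No gaps.
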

\begin{proof} Let $\go, \gt \in \D N^\ast$. Let $\go=\go_1\go_2\cdots \go_k$ and $\gt=\gt_1\gt_2\cdots\gt_m$ for some $k, m\in \D N$.
Then, $p_\go=p_\gt$ implies $\go_1+\go_2+\cdots +\go_k=\gt_1+\gt_2+\cdots+\gt_m$, and so $s_\go=s_\gt$. Thus,
\[\int_{J_\go} (x-a(\go))^2  dP(x)=p_\go s_\go^2 V=p_\gt s_\gt^2 V=\int_{J_\gt}(x-a(\gt))^2 dP(x),\]
which is the lemma.
\end{proof}

\begin{defi}\label{defi1}
For $n\in \D N$ with $n\geq 2$ let $\ell(n)$ be the unique natural number with $2^{\ell(n)} \leq n<2^{\ell(n)+1}$. Write
 \[\ga(\ell(n)):=\{a(\go)  : \go \in \D N^\ast\te{ and } p_\go=\frac{1}{2^{\ell(n)}}\}\uu \{a(\go, \infty)  : \go \in \D N^\ast\te{ and } p_\go=\frac{1}{2^{\ell(n)}}\}.\]
For $I\sci\ga(\ell(n))$ with card$(I)=n-2^{\ell(n)}$, write
\begin{align*} \ga_n(I):&=(\ga(\ell(n)) \setminus I)\uu \set{a (\go 1) : a(\go) \in I} \uu \set {a(\go1, \infty) : a(\go) \in I}\\
& \qquad \uu \set{a (\go^-(\go_{|\go|}+ 1)) : a(\go, \infty) \in I} \uu \set {a(\go^-(\go_{|\go|}+ 1), \infty) : a(\go, \infty) \in I}.
\end{align*}

\end{defi}

\begin{remark} In Definition~\ref{defi1}, if $n=2^{\ell(n)}$, then $I=\es$, and so, $\ga_n(I)= \ga(\ell(n))$.

\end{remark}

Using Definition~\ref{defi1}, we now give a few examples.

\begin{example} Let $n=3$. Then, $\ell(n)=1$, $\ga(1)=\set{a(1), a(1, \infty)}=\set{\frac 16, \frac 56}$,  $\te{card}(I)=1$.
If $I=\set{a(1)}$, then
\[\ga_3(I)=\set{a(11), a(11, \infty), a(1, \infty)}=\set{\frac 1{18}, \frac 5{18}, \frac 56}.\]
If $I=\set{a(1, \infty)}$, then,
\[\ga_3(I)=\set{a(1), a(2), a(2, \infty)}=\set{\frac 1{6}, \frac {13}{18}, \frac{17}{18}}.\]
\end{example}

\begin{example} Let $n=4$. Then, $\ell(n)=2$, $I=\es$, and so
\[\ga_4(I)=\ga(2)=\set{a(11), a(11, \infty), a(2), a(2, \infty)}=\set{\frac 1{18}, \frac 5{18}, \frac {13}{18}, \frac{17}{18}}.\]

\end{example}

\begin{example} Let $n=5$. Then, $\ell(n)=2$, $\ga(2)=\set{a(11), a(11, \infty), a(2), a(2, \infty)}$, $I\sci \ga(2)$ with card$(I)=1$. If $I=\set{a(11)}$, then
\[\ga_5(I)=\set{a(111), a(111, \infty), a(11, \infty), a(2), a(2, \infty)}=\set{\frac 1{54}, \frac {5}{54}, \frac 5{18}, \frac {13}{18}, \frac{17}{18}}.\]
 If $I=\set{a(2)}$, then
\[\ga_5(I)=\set{a(11), a(11, \infty), a(21), a(21, \infty), a(2, \infty)}=\set{\frac 1{18}, \frac {5}{18}, \frac {37}{54}, \frac {41}{54}, \frac{17}{18}}.\]
If $I=\set{a(11, \infty)}$, then
\[\ga_5(I)=\set{a(11), a(12), a(12, \infty), a(2), a(2, \infty)}=\set{\frac 1{18}, \frac {13}{54}, \frac {17}{54}, \frac {13}{18}, \frac{17}{18}}.\]
If $I=\set{a(2, \infty)}$, then
\[\ga_5(I)=\set{a(11), a(11, \infty), a(2), a(3), a(3, \infty)}=\set{\frac 1{18}, \frac {5}{18}, \frac {13}{18}, \frac {49}{54}, \frac{53}{54}}.\]
\end{example}
Let us now prove the following proposition.
\begin{prop} \label{prop1}
Let $\ga_n(I)$ be the set as defined by Definition~\ref{defi1}. Then
\[\int \min_{a \in \ga_n(I)}(x-a)^2 dP(x)= \frac 1{18^{\ell{(n)}}}  \frac 1 8 \Big(2^{\ell(n)+1}-n +\frac 19 (n-2^{\ell(n)})\Big).\]
\end{prop}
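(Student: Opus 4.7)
Set $k := \ell(n)$ and $m := n - 2^k$, so that $0 \le m < 2^k$. The strategy is to realize $\ga_n(I)$ as the set of centroids of an explicit $n$-piece measurable partition of $J$, verify that this partition agrees (up to $P$-null sets) with the Voronoi partition of $\ga_n(I)$, and then add per-piece errors using Lemma \ref{lemma4}.

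First, I check that $\{J_\go : p_\go = 2^{-k}\} \cup \{J_{(\go,\infty)} : p_\go = 2^{-k}\}$ is a measurable partition of $J$ into $2^k$ pieces of equal mass $2^{-k}$. This follows by induction on $k$ from the two fundamental refinements
\[J_\gt = J_{\gt 1} \cup J_{(\gt 1, \infty)}, \qquad J_{(\gt, \infty)} = J_\gs \cup J_{(\gs, \infty)} \text{ where } \gs = \gt^-(\gt_{|\gt|}+1),\]
each of which splits its parent into two halves of equal mass. Hence $\ga(\ell(n))$ is exactly the set of centroids for this partition, and $\ga_n(I)$ refines it by subdividing the $m$ pieces associated with $I$ via the same construction, producing an $n$-piece partition $\C P_n$ whose pieces biject with $\ga_n(I)$ through the centroid map.

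Next, I verify that $\C P_n$ is, up to a $P$-null set, the Voronoi partition of $\ga_n(I)$. Each piece of $\C P_n$ is contained in a sub-interval of $[0,1]$, distinct pieces are separated by pairwise disjoint open gaps on which $P$ vanishes, and each centroid lies in the convex hull of its piece (by Note \ref{note1}). For two adjacent centroids $c_R < c_{R'}$ one checks that $\tfrac{1}{2}(c_R + c_{R'})$ lies in the gap between $R$ and $R'$; by the self-similarity of the $S_j$'s and the explicit form $a(\go) = S_\go(\tfrac 12)$, $a(\go,\infty) = S_{\go^-(\go_{|\go|}+1)}(\tfrac 12) + s_{\go^-(\go_{|\go|}+1)}$, this reduces to a bounded number of inequalities at the top level of the refinement, which are easily verified. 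I expect this Voronoi bookkeeping to be the only genuinely nontrivial step, since one must simultaneously handle both refinement types (splitting $a(\go)$ into $\{a(\go 1), a(\go 1, \infty)\}$ and splitting $a(\go, \infty)$ into $\{a(\gs), a(\gs,\infty)\}$).

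With Voronoi agreement in hand, the integral splits over $\C P_n$, and Lemma \ref{lemma4} gives $\int_R (x - c_R)^2 \, dP(x) = p_\go s_\go^2 V$ for each piece $R$, equal to $V/18^k$ for the $2^k - m$ unrefined pieces and to $V/18^{k+1}$ for the $2m$ refined pieces. Using $V = 1/8$ and $n = 2^k + m$, I obtain
\begin{align*}
\sum_{R \in \C P_n} \int_R (x - c_R)^2 \, dP(x) &= (2^k - m)\frac{V}{18^k} + 2m \frac{V}{18^{k+1}} \\
&= \frac{V}{18^k}\Big(2^k - m + \frac{m}{9}\Big) = \frac{1}{18^{\ell(n)}}\cdot \frac{1}{8}\Big(2^{\ell(n)+1} - n + \frac{n - 2^{\ell(n)}}{9}\Big),
\end{align*}
which is the claimed identity. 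Notice in particular that the per-element error reduction $p_\go s_\go^2 V - 2 p_{\go'} s_{\go'}^2 V = \tfrac{8}{9} p_\go s_\go^2 V$ is identical in both refinement cases, so the final sum depends only on $|I| = m$, not on the choice of $I$.
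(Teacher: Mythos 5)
Your proof is correct and follows essentially the same route as the paper's: decompose the integral over the natural $n$-piece partition of $J$ corresponding to $\ga_n(I)$, apply Lemma~\ref{lemma4} to each piece to get $p_\go s_\go^2 V$, and sum. You are in fact somewhat more careful than the paper, which writes down that decomposition ``using the definition of $\ga_n(I)$'' without flagging the need to verify that the pieces sit inside the Voronoi cells of their centroids $P$-almost-surely, whereas you at least identify and sketch that step.
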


\begin{proof}
Using the definition of $\ga_n(I)$, we have
\begin{align*}
&\int \min_{a \in \ga_n(I)}(x-a)^2 dP(x)\\
&=\sum_{a(\go)  \in \ga(\ell(n))\setminus I}\int_{J_\go}(x-a(\go))^2 dP(x)+ \sum_{a(\go, \infty)  \in \ga(\ell(n))\setminus I}\int_{J_(\go, \infty)}(x-a(\go, \infty))^2 dP(x)\\
&+\sum_{a(\go) \in I} \Big ( \int_{J_{\go1}} (x-a(\go1))^2 dP(x)+ \int_{J_{(\go1, \infty)}} (x-a(\go1, \infty))^2 dP(x)\Big)\\
& +\sum_{a(\go, \infty) \in I} \Big ( \mathop{\int}\limits_{J_{\go^-(\go_{|\go|}+1)}} (x-a(\go^-(\go_{|\go|}+1)))^2 dP(x)+ \mathop{\int}\limits_{J_{(\go^-(\go_{|\go|}+1), \infty)}} (x-a(\go^-(\go_{|\go|}+1), \infty))^2 dP(x)\Big).
\end{align*}
Now, using Lemma~\ref{lemma4}, we have
\begin{align*}
&\sum_{a(\go)  \in \ga(\ell(n))\setminus I}\int_{J_\go}(x-a(\go))^2 dP(x)+ \sum_{a(\go, \infty)  \in \ga(\ell(n))\setminus I}\int_{J_(\go, \infty)}(x-a(\go, \infty))^2 dP(x)\\
&=\sum_{a(\go)  \in \ga(\ell(n))\setminus I}p_\go s_\go^2 V+ \sum_{a(\go, \infty)  \in \ga(\ell(n))\setminus I}p_\go s_\go^2 V\\
&=\frac 1{18^{\ell{(n)}}}  \frac 1 8 \te{ card}(\ga(\ell(n))\setminus I)=\frac 1{18^{\ell{(n)}}}  \frac 1 8 ( 2^{\ell(n)+1}-n).
\end{align*}
Again, by Lemma~\ref{lemma4}, we have
\begin{align*}
&\sum_{a(\go) \in I} \Big ( \int_{J_{\go1}} (x-a(\go1))^2 dP(x)+ \int_{J_{(\go1, \infty)}} (x-a(\go1, \infty))^2 dP(x)\Big)=2 p_1s_1^2 V \sum_{a(\go) \in I} p_\go s_\go^2,
\end{align*}
and
\begin{align*}
&\sum_{a(\go, \infty) \in I} \Big ( \mathop{\int}\limits_{J_{\go^-(\go_{|\go|}+1)}} (x-a(\go^-(\go_{|\go|}+1)))^2 dP(x)+ \mathop{\int}\limits_{J_{(\go^-(\go_{|\go|}+1), \infty)}} (x-a(\go^-(\go_{|\go|}+1), \infty))^2 dP(x)\Big)\\
&=2 p_1s_1^2 V \sum_{a(\go, \infty) \in I} p_\go s_\go^2.
\end{align*}
Combining all these,
\begin{align*}
&\int \min_{a \in \ga_n(I)}(x-a)^2 dP(x)=\frac 1{18^{\ell{(n)}}}  \frac 1 8 ( 2^{\ell(n)+1}-n)+2 p_1s_1^2 V\Big(\sum_{a(\go) \in I} p_\go s_\go^2+\sum_{a(\go, \infty) \in I} p_\go s_\go^2\Big)\\
&=\frac 1{18^{\ell{(n)}}}  \frac 1 8 ( 2^{\ell(n)+1}-n)+\frac 1 9 \frac 1 8 \frac 1 {18^{\ell(n)}} \te{card}(I)=\frac 1{18^{\ell{(n)}}}  \frac 1 8\Big(2^{\ell(n)+1}-n+\frac 1 9(n-2^{\ell(n)})\Big),
\end{align*}
which is the proposition.
\end{proof}

\begin{cor} \label{cor21}
Let $V_n$ be the $n$th quantization error for every $n\geq 1$. Then,
\[V_n\leq \frac 1{18^{\ell{(n)}}}  \frac 1 8\Big(2^{\ell(n)+1}-n+\frac 1 9(n-2^{\ell(n)})\Big).\]
\end{cor}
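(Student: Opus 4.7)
The plan is to observe that this corollary is essentially immediate from Proposition~\ref{prop1} combined with a cardinality check on the set $\ga_n(I)$. Since $V_n$ is defined as the infimum of the distortion error over all subsets of $\D R$ of cardinality at most $n$, it suffices to exhibit a single set of cardinality at most $n$ whose distortion matches the upper bound.

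The main step is to verify that $\operatorname{card}(\ga_n(I)) = n$ for any admissible $I \ci \ga(\ell(n))$ with $\operatorname{card}(I) = n - 2^{\ell(n)}$. First I would count $\ga(\ell(n))$: a word $\go \in \D N^\ast$ satisfies $p_\go = 1/2^{\ell(n)}$ if and only if the sum of its letters equals $\ell(n)$, and the number of such words is the number of compositions of $\ell(n)$, namely $2^{\ell(n)-1}$. Each such $\go$ contributes two elements, $a(\go)$ and $a(\go,\infty)$, to $\ga(\ell(n))$, giving $\operatorname{card}(\ga(\ell(n))) = 2^{\ell(n)}$. Since $2^{\ell(n)} \leq n < 2^{\ell(n)+1}$, the choice $\operatorname{card}(I) = n - 2^{\ell(n)}$ makes sense (it lies between $0$ and $2^{\ell(n)}$). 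The construction of $\ga_n(I)$ removes the $n - 2^{\ell(n)}$ elements of $I$ from $\ga(\ell(n))$ and, for each removed element, adds exactly two replacement elements (either $a(\go 1), a(\go 1, \infty)$ or $a(\go^-(\go_{|\go|}{+}1)), a(\go^-(\go_{|\go|}{+}1), \infty)$). Hence
\[
\operatorname{card}(\ga_n(I)) = 2^{\ell(n)} - (n - 2^{\ell(n)}) + 2(n - 2^{\ell(n)}) = n.
\]

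With this cardinality established, since $\ga_n(I) \sci \D R$ has $n$ elements, the definition of $V_n$ immediately gives
\[
V_n \leq \int \min_{a \in \ga_n(I)} (x-a)^2\, dP(x),
\]
and applying Proposition~\ref{prop1} to the right-hand side produces the claimed bound. I do not anticipate any real obstacle: the only nontrivial point to check is the cardinality count above, and everything else is an immediate consequence of the infimum definition of $V_n$ and of Proposition~\ref{prop1}, which has already been proved.
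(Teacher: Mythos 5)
Your proof is correct and matches the argument the paper intends (the paper gives no explicit proof of the corollary, treating it as an immediate consequence of Proposition~\ref{prop1}). You fill in exactly the right details: the cardinality count of $\ga(\ell(n))$ via compositions of $\ell(n)$, the net count of $\ga_n(I)$, and the appeal to the infimum definition of $V_n$; the only thing you leave implicit (as the paper also does) is that the listed points are pairwise distinct, which holds because the intervals $J_\go$ for distinct words $\go$ of equal weight are disjoint.
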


In the next section, Theorem~\ref{Th1} gives the optimal sets of $n$-means and the $n$th quantization errors for all $n\geq 2$.

\section{Optimal sets of $n$-means for all $n\geq 2$}
In this section, first we give some basic lemmas and propositions that we need to state and prove Theorem~\ref{Th1} which gives the main result of the paper. To prove the lemmas and propositions, we will frequently use the formulas given by the expressions \eqref{eq1} and \eqref{eq222}.

\begin{lemma}\label{lemma11} Let $\ga:=\{a_1, a_2\}$ be an optimal set of two-means, $a_1<a_2$. Then, $a_1=a(1)=\frac 16$, $a_2=a(1, \infty)=\frac 56$ and the corresponding quantization error is $V_2=\frac{1}{72}=0.0138889$.
\end{lemma}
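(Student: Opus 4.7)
The plan is to combine the upper bound $V_2\leq\frac1{72}$ from Corollary~\ref{cor21} with a variance decomposition that forces the Voronoi boundary of any optimal two-point set into the first gap $[\frac13,\frac23]$ of the support. By Proposition~\ref{prop10}, an optimal $\ga=\{a_1,a_2\}$ with $a_1<a_2$ is a centroidal Voronoi tessellation satisfying $\ga\sci[0,1]$. Writing $p=\frac{a_1+a_2}2$, $t=P(J\ii(-\infty,p])$, $\gm_L=E(X|X\leq p)=a_1$ and $\gm_R=E(X|X>p)=a_2$, the law of total variance combined with $V=\frac18$ from Lemma~\ref{lemma2} yields
\[V(\ga)=V-t(1-t)(\gm_R-\gm_L)^2=\tfrac18-\gd(p),\qquad \gd(p):=t(1-t)(\gm_R-\gm_L)^2,\]
so the optimality constraint $V(\ga)\leq\frac1{72}$ translates into $\gd(p)\geq\frac19$.

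First I will treat $p\in[\frac13,\frac23]$. Since $J\sci[0,\frac13]\uu[\frac23,1]$ with $J_1=J\ii[0,\frac13]$ and $J_{(1,\infty)}=J\ii[\frac23,1]$, the Voronoi partition collapses to $\{J_1,J_{(1,\infty)}\}$; Proposition~\ref{prop10}(iii) then forces $a_1=a(1)=\frac16$ and $a_2=a(1,\infty)=\frac56$, the midpoint $\frac12$ is automatically self-consistent with $p$, and Lemma~\ref{lemma4} yields $V(\ga)=E(a(1))+E(a(1,\infty))=2p_1s_1^2V=\frac1{72}$. This identifies the claimed optimum and matches the upper bound. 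What remains is to rule out $p\notin[\frac13,\frac23]$ by establishing the strict inequality $\gd(p)<\frac19$ there, which produces the contradiction $V(\ga)>\frac1{72}$.

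For the strict inequality I will use the reformulation $\gd(p)=F(p)^2/(t(1-t))$, where $F(p):=\int_{J\ii(-\infty,p]}(\tfrac12-x)dP=t(\tfrac12-\gm_L)=(1-t)(\gm_R-\tfrac12)$. For $p>\frac23$, set $\gh=t-\tfrac12\in(0,\tfrac12]$; since each point in $J\ii(\frac23,p]$ satisfies $x-\tfrac12\geq\tfrac16$, one has $\int_{J\ii(\frac23,p]}(x-\tfrac12)dP\geq\gh/6$, hence $F(p)\leq\frac{1-\gh}6$. The target $9F^2\leq t(1-t)=\frac14-\gh^2$ then reduces to $(1-\gh)^2\leq1-4\gh^2$, equivalently $\gh(5\gh-2)\leq0$, valid on $\gh\in[0,\frac25]$. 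In the complementary range $\gh>\frac25$ (so $t>\frac9{10}$), the alternative bound $F\leq\frac{1-t}2$ coming from $\gm_R-\tfrac12\leq\tfrac12$ gives $9F^2\leq\frac{9(1-t)^2}4$, which is dominated by $t(1-t)$ provided $t>\frac9{13}$, automatic here. The case $p<\frac13$ is handled symmetrically: use $\gh'=\tfrac12-t$ together with $\tfrac12-x\geq\tfrac16$ on $J_1$ to get the same first estimate, and fall back on $F\leq t/2$ (from $\gm_L\geq0$) when $t<\frac1{10}$. The main obstacle is precisely this two-regime estimate for $F$: the "geometric" bound $F\leq(1-\gh)/6$ exploits the Cantor gap and is sharp near $\gh=0$ but weakens as one cluster shrinks, so it must be patched with the extremal bound $F\leq(1-t)/2$ or $F\leq t/2$ near $t=1$ or $t=0$.
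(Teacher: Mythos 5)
Your proposal is correct, but it takes a genuinely different route from the paper. The paper, after establishing the same upper bound $V_2\le\frac{1}{72}$, rules out $a_1\ge\frac13$ and $a_2\le\frac23$ by computing explicit distortion integrals for the boundary configurations (e.g.\ $\int_{J_1}(x-\frac13)^2\,dP=\frac{1}{48}$, and a three-interval truncation of $J_{(1,\infty)}$), then notes that $\frac13\le\frac{a_1+a_2}{2}\le\frac23$ forces $J_1\subseteq M(a_1|\alpha)$ and $J_{(1,\infty)}\subseteq M(a_2|\alpha)$, after which centroidal optimization on each piece gives $a(1),a(1,\infty)$. You instead invoke the law of total variance to write $V(\alpha)=\frac18-\delta(p)$ with $\delta(p)=F(p)^2/(t(1-t))$, reduce the optimality constraint to $\delta(p)\ge\frac19$, and show $\delta(p)<\frac19$ whenever the Voronoi boundary $p$ leaves the gap $[\frac13,\frac23]$, via a two-regime bound on $F$ (the gap bound $F\le\frac{1-\eta}{6}$ for $\eta\le\frac25$, patched with the trivial bound $F\le\frac{1-t}{2}$ for $t>\frac{9}{10}$). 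Your version is more structural and dispenses with the ad hoc numeric integrals, at the cost of a longer inequality chase; the paper's is shorter but less illuminating about why the support gap forces the boundary inside it. One thing to make explicit if you write this up: you need \emph{strict} inequality $\delta(p)<\frac19$ to get the contradiction, and the chain as written only gives $\delta(p)\le\frac19$ at $\eta=\frac25$. This is easily repaired: since $P$ is non-atomic and $J\cap(\frac23,p]$ has positive mass when $\eta>0$, the estimate $\int_{J\cap(\frac23,p]}(x-\frac12)\,dP\ge\frac{\eta}{6}$ is actually strict (not all mass can sit at $\frac23$), so $F<\frac{1-\eta}{6}$ strictly; similarly $\mu_R<1$ strictly. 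With that noted, the argument is complete.
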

\begin{proof}
by Corollary~\ref{cor21},  $V_2\leq \frac 1{72}=0.0138889$.
 Let $\ga=\{a_1, a_2\}$ be an optimal set of two-means, $a_1<a_2$. Since $a_1$ and $a_2$ are the centroids of their own Voronoi regions, we have $0\leq a_1<a_2\leq 1$. If $a_1\geq \frac 13$, then
\begin{align*}
\frac 1{72}\geq V_2 \geq \int_{J_1}(x-\frac 13)^2 dP=\frac{1}{48}>\frac 1{72}>V_2,
\end{align*}
which is a contradiction, and so $a_1< \frac 13$. If $a_2\leq \frac 23$, then
\begin{align*}
&\frac 1{72}\geq V_2 \geq \int_{J_{(1, \infty)}}(x-\frac 23)^2 dP>\int_{J_2\uu J_3\uu J_4}(x-\frac 23)^2 dP=\frac{3959}{279936}=0.0141425>V_2,
\end{align*}
which leads to a contradiction. Thus, $\frac 23<a_2$. Since $0\leq a_1\leq \frac 13 <\frac 23\leq  a_2\leq 1$, we have $\frac 13 \leq \frac{a_1+a_2}{2}\leq \frac 23$, and so $J_1 \ci M(a_1|\ga)$ and $J_{(1, \infty)} \ci M(a_2|\ga)$. Thus,
\[\int\min_{a\in \ga}(x-a)^2 dP=\int_{J_1}(x-a_1)^2 dP+\int_{J_{(1, \infty)}}(x-a_2)^2 dP,\]
which is minimum when $a_1=a(1)=S_1(\frac 12)=\frac 16 \te{ and } a_2=a(1,\infty)=S_2(\frac 12) +\frac 1{3^2}=\frac 56,$
and the corresponding quantization error is $V_2=\frac 1{72}$. Hence, the proof of the lemma is complete.
\end{proof}


\begin{prop} \label{prop211}
Let $\ga_n$ be an optimal set of $n$-means for $n\geq 2$. Then, $\ga_n\ii J_1\neq \es$ and $\ga_n\ii [\frac 23, 1]\neq\es$. Moreover, the Voronoi region of any point in $\ga_n\ii J_1$ does not contain any point from $[\frac 23, 1]$ and the Voronoi region of any point in $\ga_n\ii [\frac 23, 1]$ does not contain any point from $J_1$.
\end{prop}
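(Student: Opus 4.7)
The plan is to prove the four assertions by contradiction, using Corollary~\ref{cor21} to furnish an upper bound on $V_n$ strictly smaller than the lower bound forced by the supposed configuration, and then invoking Proposition~\ref{prop10} together with the atomlessness of $P$ to kill the borderline case.

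To establish $\ga_n\ii J_1\neq\es$, suppose to the contrary that every $a\in\ga_n$ satisfies $a>\tfrac 13$. Then for each $x\in J_1=[0,\tfrac 13]$ we have $\min_{a\in\ga_n}(x-a)^2\ge(x-\tfrac 13)^2$, so $V_n\ge \int_{J_1}(x-\tfrac 13)^2\,dP$. Applying the first identity of \eqref{eq222} with $\go=1$ and $x_0=\tfrac 13$, together with Lemma~\ref{lemma4},
\[
\int_{J_1}\Big(x-\tfrac 13\Big)^2 dP=E(a(1))+\Big(\tfrac 13-a(1)\Big)^2 p_1=\tfrac{1}{144}+\tfrac{1}{72}=\tfrac{1}{48}.
\]
On the other hand $V_n\le V_2=\tfrac 1{72}$ for all $n\ge 2$ by Lemma~\ref{lemma11} and the monotonicity of $n$th quantization errors, so $V_n<\tfrac 1{48}$, a contradiction. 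The argument for $\ga_n\ii[\tfrac 23,1]\neq\es$ runs in parallel: if every $a\in\ga_n$ is strictly less than $\tfrac 23$, the second identity of \eqref{eq222} with $\go=1$ and $x_0=\tfrac 23$ yields
\[
V_n\ge\int_{J_{(1,\infty)}}\Big(x-\tfrac 23\Big)^2 dP=E(a(1))+\Big(\tfrac 23-a(1,\infty)\Big)^2(1-p_1)=\tfrac 1{48},
\]
again contradicting $V_n<\tfrac 1{48}$.

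For the Voronoi region assertions, suppose $a\in\ga_n\ii J_1$ and some $z\in[\tfrac 23,1]$ lies in $M(a|\ga_n)$. Choose any $b\in\ga_n\ii[\tfrac 23,1]$, which exists by the previous part. From $|a-z|\le |b-z|$, the bounds $|a-z|=z-a\ge\tfrac 13$ and $|b-z|\le\tfrac 13$ must both be equalities, forcing $a=\tfrac 13$, $z=\tfrac 23$, and $b=1$. Therefore $M(a|\ga_n)=[m_-,\tfrac 23]$ with $m_-<\tfrac 13$. Because $C\ii(\tfrac 13,\tfrac 23)=\es$ and the self-similar definition of $P$ gives $P(\{\tfrac 13\})=P(\{\tfrac 23\})=0$, all the $P$-mass of $M(a|\ga_n)$ lies in $[m_-,\tfrac 13)$, so its centroid is strictly less than $\tfrac 13$, contradicting $a=\tfrac 13=E(X\,|\,X\in M(a|\ga_n))$ from Proposition~\ref{prop10}(iii). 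The mirror argument handles a point of $\ga_n\ii[\tfrac 23,1]$ whose Voronoi region meets $J_1$. The main difficulty is precisely the equality case $a=\tfrac 13$, $z=\tfrac 23$, $b=1$, which is not excluded by the distance comparison alone and requires the centroid condition together with the atomlessness of $P$ to eliminate.
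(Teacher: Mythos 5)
Your proof is correct and follows the same overall strategy as the paper (contradiction via an upper bound on $V_n$ furnished by Corollary~\ref{cor21} and Lemma~\ref{lemma11}), but it diverges in two places worth noting. For the nonemptiness assertions you work directly with the exact integrals $\int_{J_1}(x-\tfrac 13)^2\,dP=\int_{J_{(1,\infty)}}(x-\tfrac 23)^2\,dP=\tfrac 1{48}$ computed from \eqref{eq222}, and compare against the clean bound $V_n\le V_2=\tfrac 1{72}$; the paper instead sets up the separate bound $V_n\le V_3\le \tfrac 5{648}$ for $n\ge 3$, refers back to Lemma~\ref{lemma11} for $n=2$, and (in Lemma~\ref{lemma11}) lower-bounds the second integral by truncating to $J_2\cup J_3\cup J_4$ rather than evaluating the full series. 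Your route is slightly cleaner since the same computation settles all $n\ge 2$ at once. For the Voronoi-region assertions the divergence is more substantial: the paper picks $j=\max\{i:a_i\le\tfrac 13\}$ and argues that if $M(a_j|\ga_n)$ meets $[\tfrac 23,1]$ then $\tfrac 12(a_j+a_{j+1})>\tfrac 23$, forcing $a_{j+1}>1$, a contradiction. As written that argument silently treats the inequality as strict, leaving the borderline configuration $a_j=\tfrac 13$, $a_{j+1}=1$, Voronoi boundary at $\tfrac 23$, unaddressed; that configuration places the single point $\tfrac 23$ in $M(a_j|\ga_n)$ without requiring $a_{j+1}>1$. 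You identify this gap explicitly, pin the equality case down to $a=\tfrac 13$, $z=\tfrac 23$, $b=1$ by a triangle-type distance comparison, and then kill it with Proposition~\ref{prop10}(iii) plus the atomlessness of $P$: the centroid of a region whose $P$-mass lives entirely in $[m_-,\tfrac 13)$ cannot equal $\tfrac 13$. This buys you a proof of the statement exactly as worded (no point of $[\tfrac 23,1]$ in the Voronoi region, boundary point included), at the cost of invoking the centroid condition, which the paper's shorter argument avoids for the interior case.
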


\begin{proof} By Lemma~\ref{lemma11}, the proposition is true for $n=2$. We now show that the proposition is true for all $n\geq 3$. Consider the set of three points $\gb$ given by $\gb:=\set{a(11), a(11, \infty), a(1, \infty)}$. Then, the distortion error is
\[\int\min_{a\in \gb} (x-a)^2dP=\mathop{\int}\limits_{J_{11}}(x-a(11))^2 dP+\mathop{\int}\limits_{J_{(11, \infty)}}(x-a(11, \infty))^2 dP+\mathop{\int}\limits_{J_{(1, \infty)}}(x-a(1, \infty))^2 dP=\frac{5}{648}.\]
Since $V_n$ is the quantization error for $n$-means for all $n\geq 3$, we have $V_n\leq V_3\leq \frac{5}{648}=0.00771605$. Let $\ga_n:=\set{a_1<a_2<\cdots<a_n}$ be an optimal set of $n$-means for $n\geq 3$. Since the optimal quantizers  are the centroids of their own Voronoi regions, we have $0\leq a_1<a_2\cdots<a_n\leq 1$.
Proceeding in the similar way as Lemma~\ref{lemma11}, it can be shown that $a_1<\frac 13$ and $\frac 23<a_n$ yielding the fact that $\ga_n\ii J_1\neq \es$ and $\ga_n\ii [\frac 23, 1]\neq \es$. Let $j=\max\set{ i : a_i\leq \frac 13}$. Then, $a_j\leq \frac 13$. Suppose that the Voronoi region of $a_j$ contains points from $[\frac 23, 1]$. Then, we must have $\frac 12(a_j+a_{j+1})>\frac 23$ implying $a_{j+1}>\frac 43-a_j\geq \frac 43-\frac 13=1$, which gives a contradiction. Hence, the Voronoi region of any point in $\ga_n\ii J_1$ does not contain any point from $J_{(1, \infty)}$. Similarly, we can show that the Voronoi region of any point in $\ga_n\ii [\frac 23, 1]$ does not contain any point from $J_1$. Thus, the proof of the proposition is complete.
\end{proof}

We need the following two lemmas to prove Lemma~\ref{lemma12}.

\begin{lemma} \label{lemma120}
Let $V(P, J_1, \set{a, b})$ be the quantization error due to the points $a$ and $b$ on the set $J_1$, where $0\leq  a<b$ and $b=\frac 13$. Then, $a=a(11)$ and
\[V(P, J_1, \set{a, b})=\int_{J_{11}}(x-a(11))^2 dP+\int_{J_{(11, \infty)}}(x-\frac 13)^2dP=\frac{1}{648}.\]
\end{lemma}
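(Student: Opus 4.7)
The plan is to view $V(P,J_1,\set{a,b})=\int_{J_1}\min\!\big((x-a)^2,(x-b)^2\big)\,dP$ as a function of $a\in[0,\tfrac 13)$ with $b=\tfrac 13$ fixed, and show it is uniquely minimised at $a=a(11)=\tfrac{1}{18}$ with minimum value $\tfrac{1}{648}$. The cornerstone is the Voronoi geometry: the midpoint $M=\tfrac{a+1/3}{2}\ge\tfrac{1}{6}>\tfrac{1}{9}$ always lies strictly to the right of $J_{11}\ci[0,\tfrac 19]$, so $J_{11}$ lies in $a$'s Voronoi cell for every admissible $a$. Moreover $J_{(11,\infty)}\ci[\tfrac 29,\tfrac 13]$ and the open gap $(\tfrac 19,\tfrac 29)$ carries no $P$-mass, so $V(a)$ switches regime precisely at $a=\tfrac 19$.

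\textbf{Easy range} $a\in[0,\tfrac 19]$: here $M\le\tfrac 29$ forces a clean partition,
\[
V(P,J_1,\set{a,b})=\int_{J_{11}}(x-a)^2\,dP+\int_{J_{(11,\infty)}}\Big(x-\frac 13\Big)^2\,dP.
\]
The second integral is constant in $a$; by \eqref{eq222} the first equals $E(a(11))+p_{11}(a-a(11))^2$, uniquely minimised at $a=a(11)=S_{11}(\tfrac 12)=\tfrac{1}{18}\in[0,\tfrac 19]$.

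\textbf{Hard range} $a\in(\tfrac 19,\tfrac 13)$: now $M>\tfrac 29$ and $a$'s Voronoi cell absorbs $R:=J_{(11,\infty)}\cap[\tfrac 29,M]$. Writing out the difference,
\[
V(P,J_1,\set{a,b})-V\!\left(P,J_1,\set{\tfrac{1}{18},b}\right)=p_{11}\Big(a-\frac{1}{18}\Big)^2-2\Big(\frac 13-a\Big)(M-\mu_R)P(R),
\]
with $\mu_R\in[\tfrac 29,M]$ the $P$-centroid of $R$. Using $P(R)\le p_1-p_{11}=\tfrac 14$, $M-\tfrac 29=\tfrac 12(a-\tfrac 19)$ and $\tfrac 13-a=\tfrac 29-t$ (where $t:=a-\tfrac 19$), the saving is bounded by $\tfrac 14(\tfrac 29-t)t$ while the cost equals $\tfrac 14(\tfrac{1}{18}+t)^2$. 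Substituting $u=18t\in(0,4)$ reduces the required inequality $(\tfrac{1}{18}+t)^2>(\tfrac 29-t)t$ to $(1+u)^2>u(4-u)$, i.e.\ $2u^2-2u+1>0$, which holds for every real $u$ (discriminant $-4$). This comparison is the main technical step.

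\textbf{Final value:} Lemma~\ref{lemma4} with $p_{11}=\tfrac 14$, $s_{11}=\tfrac 19$, $V=\tfrac 18$ gives $E(a(11))=p_{11}s_{11}^2V=\tfrac{1}{2592}$. Next, $a(11,\infty)=S_{12}(\tfrac 12)+s_{12}=\tfrac{13}{54}+\tfrac{2}{54}=\tfrac 5{18}$, and \eqref{eq222} gives
\[
\int_{J_{(11,\infty)}}\Big(x-\frac 13\Big)^2\,dP=E(a(11))+\Big(\frac 13-\frac 5{18}\Big)^2(p_1-p_{11})=\frac{1}{2592}+\frac{1}{324}\cdot\frac 14=\frac{1}{864}.
\]
Adding yields $V(P,J_1,\set{\tfrac{1}{18},\tfrac 13})=\tfrac{1}{2592}+\tfrac{1}{864}=\tfrac{1}{648}$, matching the claim. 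The main obstacle is the hard-range comparison: the coarse bound $P(R)\le\tfrac 14$ looks wasteful, but the resulting quadratic $2u^2-2u+1$ has negative discriminant, so no sharper estimate is needed.
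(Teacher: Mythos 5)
Your proof is correct and takes a genuinely different — and cleaner — route than the paper's. The paper boxes $a$ in by a sequence of numerical contradictions: it first establishes $V\le \frac{1}{648}$ using the candidate pair $\{a(11),\frac13\}$, then rules out $a\ge\frac18$ via $\int_{J_{11}}(x-\frac18)^2\,dP=\frac{11}{6912}>\frac{1}{648}$, then rules out the Voronoi cell of $a$ reaching into $[\frac29,\frac13]$ via the rather awkward estimate $\int_{J_{11}}(x-\frac19)^2\,dP+\int_{J_{122}\cup J_{123}\cup J_{124}\cup J_{13}}(x-\frac13)^2\,dP=\frac{2577311}{1632586752}>\frac{1}{648}$, and finally concludes $a=a(11)$ by appealing (implicitly) to the centroid condition once $a$'s cell is pinned down to be exactly $J_{11}$. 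Your argument instead treats $V$ as an explicit function of $a\in[0,\frac13)$, exploits the Voronoi geometry (the midpoint $M\ge\frac16$ always covers $J_{11}$, and the gap $(\frac19,\frac29)$ is $P$-null) to obtain a clean regime change at $a=\frac19$, solves the easy regime by the parallel-axis formula \eqref{eq222}, and handles the hard regime $a\in(\frac19,\frac13)$ by a uniform comparison against $a=\frac1{18}$ that reduces to the inequality $2u^2-2u+1>0$, which holds identically since the discriminant is $-4$. I checked the key identity $V(a)-V(\frac1{18})=p_{11}(a-\frac1{18})^2-2(\frac13-a)(M-\mu_R)P(R)$ and the crude bounds $P(R)\le\frac14$, $M-\mu_R\le\frac{t}{2}$; they are all correct, and the substitution $u=18t$ gives exactly $(1+u)^2>(4-u)u$. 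Your route buys a self-contained minimization argument with no case-by-case numerical estimates and no implicit reliance on the centroid condition, at the modest cost of the algebra leading to the quadratic. Both proofs reach the final value $\frac{1}{2592}+\frac{1}{864}=\frac{1}{648}$ by the same \eqref{eq222} computation.
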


\begin{proof}  Consider the set $\set{a(11), \frac 13}$. Then, as $S_{11}(1)=\frac 19<\frac 12(a(11)+\frac 13)=\frac{7}{36}<S_{12}(0)=\frac 29$, and $V(P, J_1, \set{a, b})$ is the quantization error due to the points $a$ and $b$ on the set $J_1$, we have
\[V(P, J_1, \set{a, b})\leq \int_{J_{11}}(x-a(11))^2 dP+\int_{J_{(11, \infty)}}(x-\frac 13)^2dP=\frac{1}{2592}+\frac{1}{864}=\frac{1}{648}=0.00154321. \]
If $\frac{1}{8}\leq a$, then
\[V(P, J_1, \set{a, b})\geq \int_{J_{11}}(x-\frac 18)^2 dP=\frac{11}{6912}=0.00159144>V(P, J_1, \set{a, b}),\]
which is a contradiction, and so we can assume that $a<\frac 18$. If the Voronoi region of $b$ contains points from $J_{1}$, we must have
$\frac 12(a+b)<\frac 19$ implying $a<\frac 29-b= \frac 29-\frac13=-\frac 19$, which leads to a contradiction. So, we can assume that the Voronoi region of $b$ does not contain any point from $J_{11}$ yielding $a\geq a(11)=\frac{1}{18}$. If the Voronoi region of $a$ contains points from $[\frac 29, \frac 13]$, we must have
$\frac 12(a+\frac 13)>\frac 29$ implying $a>\frac 49-\frac 13=\frac 19$, and so $\frac 19<a\leq \frac 18$. But, then $\frac 12(\frac 18+\frac 13)=\frac{11}{48}<S_{121}(1)$ yielding
\[V(P, J_1, \set{a, b})>\int_{J_{11}}(x-\frac 19)^2 dP+\int_{J_{122}\uu J_{123}\uu J_{124}\uu J_{13}}(x-\frac 13)^2=\frac{2577311}{1632586752}=0.00157867,\]
and so, $V(P, J_1, \set{a, b})>0.00157867>V(P, J_1, \set{a, b})$, which gives a contradiction. Hence, the Voronoi region of $a$ does not contain any point from $[\frac 29, \frac 13]$ yielding $a\leq a(11)$. Again, we have seen $a\geq a(11)$. Thus, $a=a(11)$ and
\[V(P, J_1, \set{a, b})=\int_{J_{11}}(x-a(11))^2 dP+\int_{J_{(11, \infty)}}(x-\frac 13)^2dP=\frac{1}{648},\]
which is the lemma.
\end{proof}

Proceeding in the similar way as Lemma~\ref{lemma120}, the following lemma can be proved.
\begin{lemma} \label{lemma1200}
Let $V(P, J_{(1, \infty)}, \set{a, b})$ be the quantization error due to the points $a$ and $b$ on the set $J_{(1, \infty)}$, where $a=\frac 23$ and $\frac23<b\leq 1$. Then, $b=a(2, \infty)$ and
\[V(P, J_{(1, \infty)}, \set{a, b})=\int_{J_{2}}(x-\frac 23)^2 dP+\int_{J_{(2, \infty)}}(x-a(2, \infty))^2dP=\frac{1}{648}.\]
\end{lemma}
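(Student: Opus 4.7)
\medskip

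\noindent\textbf{Proof proposal.}
My plan is to follow the template of Lemma~\ref{lemma120} with the roles of the leftmost piece $J_1$ and the right tail $J_{(1,\infty)}$ interchanged: the fixed endpoint moves from $b=\frac{1}{3}$ to $a=\frac{2}{3}$, and the free parameter becomes $b\in (\frac{2}{3},1]$. The target value is $b=a(2,\infty)=S_3(\frac{1}{2})+s_3=\frac{49}{54}+\frac{2}{54}=\frac{17}{18}$, which by Lemma~\ref{lemma3} also equals $E(X\mid X\in J_3\cup J_4\cup\cdots)=E(X\mid X\in J_{(2,\infty)})$.

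First I would establish the upper bound by plugging in $b=\frac{17}{18}$. The midpoint $\frac{1}{2}(\frac{2}{3}+\frac{17}{18})=\frac{7}{9}$ sits at the right endpoint of $J_2$ and hence in the gap between $J_2$ and $J_3\subseteq J_{(2,\infty)}$, so the Voronoi partition of $J_{(1,\infty)}$ with respect to $\{\frac{2}{3},\frac{17}{18}\}$ is exactly $J_2\mid J_{(2,\infty)}$. Combining \eqref{eq222} with Lemma~\ref{lemma4},
\[
\int_{J_2}(x-\tfrac{2}{3})^2\,dP=E(a(2))+(a(2)-\tfrac{2}{3})^2 p_2=\frac{1}{2592}+\frac{1}{1296}=\frac{1}{864}, \qquad \int_{J_{(2,\infty)}}(x-\tfrac{17}{18})^2\,dP=E(a(2,\infty))=\frac{1}{2592},
\]
which sum to $\frac{1}{648}$.

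For the matching lower bound, I would assume $V(P,J_{(1,\infty)},\{\frac{2}{3},b\})\leq \frac{1}{648}$ and force $b=a(2,\infty)$. The key step is to confine $b$ to $[\frac{8}{9},1]$. If $b<\frac{8}{9}$, the midpoint $\frac{1}{2}(\frac{2}{3}+b)<\frac{7}{9}$ cuts into $J_2$, so a proper sub-interval of $J_2$ is absorbed into the Voronoi cell of $b$, and, in the spirit of the $\frac{1}{9}<a\leq\frac{1}{8}$ sub-case of Lemma~\ref{lemma120}, a suitable finite sub-cluster (for instance the pieces $J_{21}, J_{22}$ together with $J_3$) yields a lower bound strictly above $\frac{1}{648}$. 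Once $b\in[\frac{8}{9},1]$, the Voronoi partition is $J_2\mid J_{(2,\infty)}$, and \eqref{eq222} gives
\[
V(P,J_{(1,\infty)},\{\tfrac{2}{3},b\})=\frac{1}{864}+E(a(2,\infty))+(b-a(2,\infty))^2\,p_{(2,\infty)},
\]
which is strictly convex in $b$ with unique minimizer $b=\frac{17}{18}$; saturation at $\frac{1}{648}$ therefore forces $b=a(2,\infty)$, and the displayed error decomposition is proved.

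The main obstacle I anticipate is the sub-case $b\in (\frac{2}{3},\frac{8}{9})$: the Voronoi cell of $b$ is then no longer a union of cylinders of the form $J_\go$ or $J_{(\go,\infty)}$, so Lemma~\ref{lemma4} does not apply directly. The workaround mirrors Lemma~\ref{lemma120}: one isolates a concrete finite union of self-similar cells of $J_{(1,\infty)}$ whose contribution, for any $b$ in the offending sub-interval, already exceeds $\frac{1}{648}$. Once this calibration is in place the rest of the argument is mechanical.
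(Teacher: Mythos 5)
Your plan is the right one in spirit --- the paper itself only says Lemma~\ref{lemma1200} is proved ``in the similar way as Lemma~\ref{lemma120}'' --- and your upper-bound computation is correct. However, two things need attention.

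First, a small slip: the midpoint is $\tfrac12(\tfrac23+\tfrac{17}{18})=\tfrac{29}{36}$, not $\tfrac79=\tfrac{28}{36}$. Since $\tfrac{29}{36}\in(\tfrac79,\tfrac89)$ this does not affect the conclusion that the Voronoi partition of $\{\tfrac23,\tfrac{17}{18}\}$ on $J_{(1,\infty)}$ is exactly $J_2\mid J_{(2,\infty)}$, but the arithmetic should be fixed.

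Second, and more seriously, the finite sub-cluster you propose for the lower bound when $b<\tfrac89$ is quantitatively far too small. Using \eqref{eq222} and Lemma~\ref{lemma4}: $\int_{J_{21}}(x-\tfrac23)^2\,dP=\tfrac1{15552}$, $\int_{J_{22}}(x-\tfrac23)^2\,dP=\tfrac{339}{839808}$, and $\int_{J_3}(x-\tfrac89)^2\,dP=\tfrac1{15552}$, so the total is $\tfrac{447}{839808}\approx 0.00053$, whereas $\tfrac1{648}=\tfrac{1296}{839808}\approx 0.00154$. The single cylinder $J_3$ captures only about $\tfrac1{18}$ of the contribution of the whole tail $J_{(2,\infty)}$, since $\int_{J_{(2,\infty)}}(x-\tfrac89)^2\,dP=\tfrac1{864}=\tfrac{972}{839808}$. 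The fix is to use the full tail $J_{(2,\infty)}$ rather than $J_3$, and to mirror Lemma~\ref{lemma120}'s two-threshold structure rather than collapsing it. Concretely: if $b\leq\tfrac78$, then $V\geq\int_{J_{(2,\infty)}}(x-\tfrac78)^2\,dP=\tfrac{33}{20736}>\tfrac{32}{20736}=\tfrac1{648}$; if $\tfrac78<b<\tfrac89$, the midpoint exceeds $\tfrac{37}{48}$, which lies to the right of $J_{21}\cup J_{22}$, so $V\geq\int_{J_{21}\cup J_{22}}(x-\tfrac23)^2\,dP+\int_{J_{(2,\infty)}}(x-\tfrac89)^2\,dP=\tfrac{393}{839808}+\tfrac{972}{839808}=\tfrac{1365}{839808}>\tfrac{1296}{839808}=\tfrac1{648}$. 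Once $b\geq\tfrac89$ is forced, your convexity argument finishes cleanly. (A still cleaner route worth noting: $P$ is invariant under $x\mapsto1-x$, which sends $J_1\to[\tfrac23,1]$, $\tfrac13\to\tfrac23$, and $a(11)\to 1-\tfrac1{18}=\tfrac{17}{18}=a(2,\infty)$, so the lemma is literally the reflection of Lemma~\ref{lemma120} and requires no new casework at all.)
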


\begin{lemma} \label{lemma12}
Let $\ga$ be an optimal set of three-means. Then, $\ga=\set{a(11), a(11, \infty), a(1, \infty)}=\set{\frac 1{18}, \frac 5{18}, \frac 56}$, or $\ga=\set{a(1), a(2), a(2, \infty)}=\set{\frac 1{6}, \frac {13}{18}, \frac{17}{18}}$ with quantization error $V_3=\frac{5}{648}=0.00771605$.
\end{lemma}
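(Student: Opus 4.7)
The plan is to use Proposition~\ref{prop211} to reduce to two configurations and then invoke self-similarity together with Lemma~\ref{lemma11} to pin each one down. By Corollary~\ref{cor21} we already have $V_3\leq \frac{5}{648}$. Let $\ga=\set{a_1<a_2<a_3}$ be an optimal set. Proposition~\ref{prop211} forces $\ga\ii J_1\ne\es$ and $\ga\ii[\frac 23,1]\ne\es$, and moreover no Voronoi region crosses the middle gap $(\frac 13,\frac 23)$; hence either two of the points lie in $J_1$ (Case~A) or two lie in $[\frac 23,1]$ (Case~B).

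In Case~A, the unique point $a_3\in[\frac 23,1]$ is the $P$-centroid of its Voronoi region, which $P$-almost surely equals $J_{(1,\infty)}$, so $a_3=a(1,\infty)=\frac 56$ and contributes $E(a(1,\infty))=\frac 1{144}$ by Lemma~\ref{lemma4}. For the remaining pair, the identity $P|_{J_1}=p_1(P\circ S_1^{-1})$ and the substitution $x=S_1(y)$ give
\[\int_{J_1}\min\bigl((x-a_1)^2,(x-a_2)^2\bigr)\,dP(x)=p_1 s_1^2\int\min\bigl((y-b_1)^2,(y-b_2)^2\bigr)\,dP(y),\]
where $b_i:=S_1^{-1}(a_i)$. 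By Lemma~\ref{lemma11} the right-hand integral is at least $V_2=\frac 1{72}$, uniquely minimized at $\set{b_1,b_2}=\set{a(1),a(1,\infty)}$. Pulling back by $S_1$ gives $\set{a_1,a_2}=\set{a(11),a(11,\infty)}=\set{\frac 1{18},\frac 5{18}}$ with contribution $p_1 s_1^2 V_2=\frac 1{1296}$, and the total is $\frac 5{648}$.

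Case~B is handled by the same template, but it hinges on the structural identity $P|_{J_{(1,\infty)}}=\frac 12\,P\circ f_2^{-1}$. One can derive this either from $\mu=\frac 12\mu\circ f_1^{-1}+\frac 12\mu\circ f_2^{-1}$ (noting that $f_1([0,1])$ is disjoint from $[\frac 23,1]\supseteq J_{(1,\infty)}$, so only the $f_2$-term contributes on such sets), or from $S_{k+1}=f_2\circ S_k$ combined with $p_{k+1}=\frac 12 p_k$ and Lemma~\ref{lemma1}. Given this identity, Proposition~\ref{prop211} forces $a_1=a(1)=\frac 16$, and the substitution $x=f_2(y)$ reduces the two-point minimization on $J_{(1,\infty)}$ to the two-point problem for $P$ itself; Lemma~\ref{lemma11} then yields $\set{a_2,a_3}=\set{f_2(\tfrac 16),f_2(\tfrac 56)}=\set{a(2),a(2,\infty)}=\set{\frac{13}{18},\frac{17}{18}}$ with total error again $\frac 5{648}$.

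Since both cases attain the upper bound, $V_3=\frac 5{648}$ and these are the only optimal sets. The main obstacle is the identity $P|_{J_{(1,\infty)}}=\frac 12 P\circ f_2^{-1}$ needed in Case~B: it is what collapses the infinite subsystem $\set{S_j}_{j\geq 2}$ into a single affine rescaling and lets Lemma~\ref{lemma11} be applied uniformly in both cases; without it one would have to run a direct two-point case analysis on $J_{(1,\infty)}$ in the style of Lemmas~\ref{lemma120} and~\ref{lemma1200}.
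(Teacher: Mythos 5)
Your reduction to Cases~A and~B does not follow from Proposition~\ref{prop211}. That proposition only constrains the Voronoi regions of points lying in $\ga\ii J_1$ or in $\ga\ii[\frac 23,1]$; it says nothing about a possible $a_2\in(\frac 13,\frac 23)$, whose Voronoi region necessarily straddles the gap (since $P$ carries no mass there and $a_2$ must be a centroid). So after invoking Proposition~\ref{prop211} you still face three configurations, not two: $(2,0,1)$, $(1,0,2)$, and $(1,1,1)$ points in $J_1$, the gap, and $[\frac 23,1]$ respectively. The $(1,1,1)$ case must be ruled out before the self-similar reduction can be run, and you cannot appeal to Proposition~\ref{prop212} for this because in the paper that proposition's $n=3$ case is established \emph{by} Lemma~\ref{lemma12}; using it here would be circular. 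The paper closes this gap with a dedicated two-case distortion estimate (its Cases~1 and~2), which in turn relies on the auxiliary computations of Lemma~\ref{lemma120} and Lemma~\ref{lemma1200}; some argument of this kind is unavoidable.

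Once the middle point is excluded, your Case~A matches the paper's scaling argument exactly. Your Case~B, however, is a genuinely different and arguably cleaner route: the paper pins down $\set{a_2,a_3}$ by directly bounding $a_2<S_2(1)$ and $a_3>S_3(0)$ and then minimizing term by term, whereas you observe the structural identity $P|_{J_{(1,\infty)}}=\tfrac 12\,P\circ f_2^{-1}$ (equivalently, $S_{k+1}=f_2\circ S_k$ together with $p_{k+1}=\tfrac 12 p_k$), which collapses the infinite subsystem $\set{S_j}_{j\geq 2}$ to a single affine copy of $P$ and lets you cite Lemma~\ref{lemma11} symmetrically in both halves. That identity is correct and worth stating explicitly; if you patch the $(1,1,1)$ exclusion, the rest of the argument stands.
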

\begin{proof} As shown in the proof of Proposition~\ref{prop211}, if $V_3$ is the quantization error for three-means, we have $V_3\leq \frac{5}{648}=0.00771605$. Let $\ga$ be an optimal set of three-means with $\ga=\set{a_1, a_2, a_3}$, where $a_1<a_2<a_3$. By Propositoin~\ref{prop211}, we have $0\leq a_1<\frac 13$ and $\frac 23< a_3\leq 1$. We now show that $\ga_3$ does not contain any point from the open interval $(\frac 13, \frac 23)$. For the sake of contradiction, assume that $a_2 \in (\frac 13, \frac 23)$. The following two cases can arise:

Case 1: $a_2 \in [\frac 12, \frac 23)$.

Then, $\frac 12(a_1+a_2)<\frac 13$ implying $a_1<\frac 23-a_2\leq \frac 23-\frac 12=\frac 16=a(1)$, otherwise, the quantization error can be strictly reduced by moving the point $a_2$ to $\frac 23$. Thus, by Lemma~\ref{lemma1200}, we have
\begin{align*}
V_3\geq \int_{J_{1}}(x-\frac 16)^2 dP+\frac{1}{648} =\frac{11}{1296}=0.00848765>V_3,
\end{align*}
which leads to a contradiction.

Case 2: $a_2 \in (\frac 13, \frac 12]$.

Then, $\frac 12(a_2+a_3)>\frac 23$ implying $a_3>\frac 43-a_2\geq \frac 43-\frac 12=\frac 56=a(1, \infty)$. Then, by Lemma~\ref{lemma120}, we have
\begin{align*}
V_3\geq \frac{1}{648}+\int_{J_{(1, \infty)}}(x-a(1, \infty))^2 dP=\frac{11}{1296}=0.00848765>V_3,
\end{align*}
which gives a contradiction.

Thus, by Case 1 and Case 2, we have $a_2 \not \in (\frac 13, \frac 23)$, i.e., either $a_2 \in [0, \frac 13]$ or $a_2 \in [\frac 23, 1]$. Let us first assume $a_2 \in [0, \frac 13]=J_1$. Set $\ga_1:=\set{a_1, a_2}$ and $\ga_2:=\set{a_3}$. Since $\ga=\ga_1\uu \ga_2$, by Lemma~\ref{lemma1}, we deduce
\[V_3=\int_{J_1}\min_{a \in \ga_1}(x-a)^2 dP+\int_{J_{(1, \infty)}}(x-a_3)^2 dP=\frac 1{18} \int\min_{a \in 3\ga_1}(x-a)^2 dP+\int_{J_{(1, \infty)}}(x-a_3)^2 dP.\]
We now show that $S_1^{-1}(\ga_1)$ is an optimal set of two-means. If $S_1^{-1}(\ga_1):=3\ga_1$ is not an optimal set of two-means, then we can find a set $\gb \sci \D R$ with $\te{card}(\gb)=2$ such that $\int\mathop{\min}\limits_{b\in \gb} (x-b)^2 dP<\int \mathop{\min}\limits_{a\in \ga_1}(x-3a)^2 dP$. But, then $(\frac 1 3 \gb) \uu \ga_2$ is a set of cardinality three with $\int{\min}_{a\in \frac 13 \gb\uu \ga_2}(x-a)^2 dP<\int{\min}_{a\in \ga} (x-a)^2 dP$, which contradicts the optimality of $\ga$. Thus, $S_1^{-1}(\ga_1)$ is an optimal set of two-means, i.e., $S_1^{-1}(\ga_1)=\set{a(1), a(1, \infty)}$ which gives $\ga_1=\set{a(11), a(11, \infty)}$. Again, $V_3$ being the quantization error, we must have $a_3=a(1, \infty)$. Thus, under the assumption $a_2 \in [0, \frac 13]=J_1$, we have $\ga=\set{a(11), a(11, \infty), a(1, \infty)}$, and then using \eqref{eq222}, we have $V_3=\frac{5}{648}$.
Let us now assume $\frac 23\leq a_2$.  Set $\gb:=\set{a_2, a_3}$. Then,
\[V_3=\int_{J_1}(x-a(1))^2 dP+\int_{J_{(1, \infty)}}\min_{b \in \gb} (x-b)^2 dP=\frac{1}{144}+\int_{J_{(1, \infty)}}\min_{b \in \gb} (x-b)^2 dP.\]
We show that $a_2<S_2(1)=\frac 79$ and $S_3(0)=\frac 89<a_3$. If $a_2\geq \frac 79$, then
\[V_3\geq \frac{1}{144}+\int_{J_2}(x-\frac 79)^2dP=\frac{7}{864}=0.00810185>V_3,\]
which leads to a contradiction. If $a_3\leq \frac 89=S_3(0)$, then,
\[V_3\geq \frac{1}{144}+\int_{J_3\uu J_4\uu J_5}(x-\frac 89)^2dP=\frac{38951}{5038848}=0.00773014>V_3,\]
which give a contradiction. Thus, $a_2<S_2(1)=\frac 79$ and $S_3(0)=\frac 89<a_3$ yielding
\[\int_{J_{(1, \infty)}}\min_{b\in \gb}(x-b)^2 dP=\int_{J_{2}}(x-a_2)^2 dP+\int_{J_{(2, \infty)}}(x-a_3)^2 dP,\]
which is minimum when $a_2=a(2)\te{ and } a_3=a(2,\infty)$. Hence, under the assumption $a_2\in [\frac 23, 1]$, we obtain $\ga=\set{a(1), a(2), a(2, \infty)}$ and $V_3=\frac{5}{648}$. Thus, the proof of the lemma is complete.
\end{proof}

\begin{prop} \label{prop212}
Let $n\geq 2$ and $\ga_n$ be an optimal set of $n$-means. Then, $\ga_n$ does not contain any point from the open interval $(\frac 13, \frac 23)$.
\end{prop}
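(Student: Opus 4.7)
The approach is proof by contradiction, generalizing the case analysis used in Lemma~\ref{lemma12}. Suppose there is some $a_i\in\ga_n\cap(\frac 13,\frac 23)$, and let $a_{i-1}<a_i<a_{i+1}$ be the nearest neighbors of $a_i$ in the ordered set $\ga_n$, so that $M(a_i\mid\ga_n)$ is the interval $((a_{i-1}+a_i)/2,(a_i+a_{i+1})/2)$. Because $P((\frac 13,\frac 23))=0$, the entire $P$-mass of $M(a_i\mid\ga_n)$ lies in $M(a_i)\cap J_1\subseteq[0,\frac 13]$ together with $M(a_i)\cap J_{(1,\infty)}\subseteq[\frac 23,1]$. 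By Proposition~\ref{prop10}(iii), $a_i$ equals the conditional expectation $E(X\mid X\in M(a_i\mid\ga_n))$, which is a convex combination of the conditional expectations over the two side-pieces. For this combination to land in $(\frac 13,\frac 23)$ both pieces must carry strictly positive $P$-mass. This forces the Voronoi boundaries to satisfy $\frac 12(a_{i-1}+a_i)<\frac 13$ and $\frac 12(a_i+a_{i+1})>\frac 23$; in particular $a_{i-1}<\frac 23-a_i$ and $a_{i+1}>\frac 43-a_i$.

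I would then split into two sub-cases according to $a_i\in[\frac 12,\frac 23)$ or $a_i\in(\frac 13,\frac 12]$; the two are symmetric (via the reflection $x\mapsto 1-x$ of $P$), and it suffices to treat the first. Here $a_{i-1}<\frac 23-a_i\leq\frac 16=a(1)$ and $a_{i+1}>\frac 23$. Following the template of Lemma~\ref{lemma12}, the idea is to exhibit a strictly cheaper configuration by either moving $a_i$ to $\frac 23$ or replacing the pair $\{a_{i-1},a_i\}$ by a better-placed pair inside $J_1\cup J_{(1,\infty)}$. The pointwise inequality $(x-a_i)^2\geq(x-\frac 23)^2$, valid for all $x\in[\frac 23,1]$ because $a_i<\frac 23\leq x$, with strict inequality on a set of positive $P$-measure, shows that the $J_{(1,\infty)}$-contribution of $a_i$ strictly drops; the $J_1$-contribution must be tracked simultaneously.

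The main obstacle is precisely the $J_1$-side: when $a_i$ is moved out of the gap, the portion of $J_1$ previously served by $a_i$ gets reassigned to $a_{i-1}$, and this reassignment may \emph{increase} cost on $J_1$ because $a_{i-1}$ was not originally a centroid for the enlarged region. The cleanest route I see is to lower bound $V_n$ by splitting at the gap: apply Lemma~\ref{lemma120} to the pair $\{a_{i-1},\frac 13\}$ on $J_1$ (yielding $\frac 1{648}$), Lemma~\ref{lemma1200} to the pair $\{\frac 23,a_{i+1}\}$ on $J_{(1,\infty)}$ (also $\frac 1{648}$), and then handle the contributions of the remaining points of $\ga_n$ on the self-similar cells $J_{11},\,J_{1j}\ (j\geq 2),\,J_2,\,J_{(2,\infty)},\ldots$ by rescaling and induction on $n$. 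Comparing the resulting lower bound with the upper bound $\frac 1{18^{\ell(n)}}\cdot\frac 18\bigl(2^{\ell(n)+1}-n+\frac 19(n-2^{\ell(n)})\bigr)$ from Corollary~\ref{cor21} yields a contradiction, since the two ``wasted'' points $a_{i-1},a_i$ pinned by our constraints are forced into strictly suboptimal positions relative to the nested configuration $\ga(\ell(n))$ of Definition~\ref{defi1}.
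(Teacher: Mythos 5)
Your setup is sound: by contradiction, identify $a_i\in(\frac 13,\frac 23)$, observe $P((\frac 13,\frac 23))=0$, invoke Proposition~\ref{prop10}(iii) to force both $\frac 12(a_{i-1}+a_i)<\frac 13$ and $\frac 12(a_i+a_{i+1})>\frac 23$, and split into the two sub-intervals. The symmetry $x\mapsto 1-x$ is indeed a valid simplification (the paper treats both sub-cases explicitly). Up to this point you match the paper exactly (where the paper's $a_{j+1}$ is your $a_i$).

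The completion, however, has a genuine gap. You propose lower-bounding $V_n$ by applying Lemma~\ref{lemma120} to $\{a_{i-1},\frac 13\}$ on $J_1$ and Lemma~\ref{lemma1200} to $\{\frac 23,a_{i+1}\}$ on $J_{(1,\infty)}$, each yielding $\frac 1{648}$. But those lemmas bound the quantization error of a \emph{two-point} configuration on $J_1$ or $J_{(1,\infty)}$; they give no lower bound on the actual contribution of $J_1$ (resp.\ $J_{(1,\infty)}$) to $V_n$ once $\ga_n$ places additional codepoints there, which it generally will (and must, when $n$ is large). The proposed patch — ``handle the remaining points by rescaling and induction'' and then compare against the $n$-dependent upper bound of Corollary~\ref{cor21} — is exactly the hard part and is left entirely unresolved; as $n$ grows the Corollary~\ref{cor21} bound shrinks like $18^{-\ell(n)}$, so a fixed-size lower bound on the whole $J_1$-error cannot possibly contradict it, and a shrinking one would require precisely the optimality structure you are trying to prove.

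The missing idea, and the reason the paper's proof is short, is to work \emph{locally} with a constant upper bound: for all $n\geq 4$, $V_n\leq V_4\leq \frac 1{648}$ (from the test set $\{a(11),a(11,\infty),a(2),a(2,\infty)\}$). In the sub-case $a_i\in[\frac 12,\frac 23)$ the constraint $a_{i-1}<\frac 16<S_{12}(0)=\frac 29$ means every codepoint is either $<\frac 16$ or $\geq\frac 12$, so every $x\in J_{12}\uu J_{13}\subseteq[\frac 29,\frac 13]$ satisfies $\min_{a'\in\ga_n}|x-a'|\geq x-\frac 16$, giving $V_n\geq\int_{J_{12}\uu J_{13}}(x-\frac 16)^2\,dP=\frac{521}{279936}>\frac 1{648}$; the other sub-case is handled the same way via $\int_{J_2}(x-\frac 56)^2\,dP=\frac 1{288}$. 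This lower bound is on a single fixed self-similar cell and is completely insensitive to how many other codepoints $\ga_n$ has elsewhere, so no induction is needed and no comparison to the $\ell(n)$-dependent bound is required.
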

\begin{proof} By Lemma~\ref{lemma11} and Lemma~\ref{lemma12}, the proposition is true for $n=2$ and $n=3$. Let us now prove that the proposition is true for all $n\geq 4$. Consider the set of four points $\gb:=\set{a(11), a(11, \infty), a(2), a(2, \infty)}$. Then, by Lemma~\ref{lemma4}, we have the distortion error as
\begin{align*}
\int \min_{a\in\gb}(x-a)^2 dP&=2\Big(E(11)+E(2)\Big )=\frac{1}{648}.
\end{align*}
Since $V_n$ is the quantization error for $n$-means for $n\geq 4$, we have $V_n\leq V_4\leq \frac{1}{648}=0.00154321$. Let $\ga_n:=\set{a_1<a_2<\cdots<a_n}$ be an optimal set of $n$-means for $n\geq 4$. Since the optimal quantizers  are the centroids of their own Voronoi regions, we have $0\leq a_1<a_2\cdots<a_n\leq 1$. Let $j:=\max\set{i : a_i\leq \frac 13}$. Then, $a_j\leq \frac 13$. Proposition~\ref{prop211} implies that $2\leq j\leq n-1$. We need to show that $\frac 23\leq a_{j+1}$. Suppose that $a_{j+1}\in (\frac 13, \frac 23)$.  Then, either $a_{j+1} \in [\frac 12, \frac 23)$, or $a_j \in (\frac 13, \frac 12]$. First, assume that $a_{j+1} \in [\frac 12, \frac 23)$.
Then, $\frac 12(a_j+a_{j+1})<\frac 13$ implying $a_j<\frac 23-a_{j+1}\leq \frac 23-\frac 12=\frac 16<\frac 29=S_{12}(0)$, and so
\begin{align*}
V_n\geq \int_{J_{12}\uu J_{13}}(x-\frac 16)^2 dP=\frac{521}{279936}=0.00186114>V_n,
\end{align*}
which leads to a contradiction. Next, assume that $a_{j+1} \in (\frac 13, \frac 12]$. Then, $\frac 12(a_{j+1}+a_{j+2})>\frac 23$ implying $a_{j+2}>\frac 43-a_{j+1}=\frac 43-\frac 12=\frac 56>S_2(1)$, and so,
\begin{align*}
V_n\geq \int_{J_{2}}(x-\frac 56)^2 dP=\frac{1}{288}=0.00347222>V_n,
\end{align*}
which gives another contradiction. Hence, $\frac 23\leq a_{j+1}$, which completes the proof of the proposition.
\end{proof}

\begin{lemma} \label{lemma213}
Let $\ga_n$ be an optimal set of $n$-means for $n\geq 4$. Then, $\te{card}(\ga_n\ii J_1)\geq 2$ and $\te{card}(\ga_n\ii [S_2(0), 1])\geq 2$.
\end{lemma}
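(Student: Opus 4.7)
The plan is to argue by contradiction, using an upper bound on $V_n$ already in hand from the proof of Proposition~\ref{prop212}: the 4-point set $\{a(11),a(11,\infty),a(2),a(2,\infty)\}$ shows $V_n\leq V_4\leq \frac{1}{648}$ for every $n\geq 4$. The idea will be that if one side contained only one optimal point, that lone point would be forced to represent nearly an entire half of the mass of $P$, incurring distortion on the order of $p_1 s_1^2 V=\frac{1}{144}$, which is too large compared with $\frac{1}{648}$.

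For the first half of the statement I would suppose, for contradiction, that $\te{card}(\ga_n\ii J_1)\leq 1$. By Proposition~\ref{prop211} this cardinality must be exactly one; call the unique point $a_i$. Proposition~\ref{prop212} then places every remaining element of $\ga_n$ in $[\tfrac 23,1]$. The small geometric check will be that $J_1\ci M(a_i|\ga_n)$: for any $x\in[0,\tfrac 13]$ and any $a_j\in\ga_n\setminus\{a_i\}$,
\[|x-a_j|=a_j-x\geq \tfrac 23-\tfrac 13=\tfrac 13\geq |x-a_i|,\]
where the last inequality holds because $x$ and $a_i$ both lie in $[0,\tfrac 13]$. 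Using that $a(1)=\tfrac 16$ minimizes $c\mapsto \int_{J_1}(x-c)^2 dP$ and invoking Lemma~\ref{lemma4},
\[V_n\geq \int_{J_1}(x-a_i)^2 dP\geq \int_{J_1}(x-a(1))^2 dP = p_1 s_1^2 V=\frac{1}{144},\]
which contradicts $V_n\leq \frac{1}{648}$. The bound on $\te{card}(\ga_n\ii[S_2(0),1])$ will follow by the symmetric argument: a unique point in $[\tfrac 23,1]$ would, by the same midpoint comparison, have Voronoi region containing all of $J_{(1,\infty)}$, and then centering at $a(1,\infty)=\tfrac 56$ together with Lemma~\ref{lemma4} again yields $V_n\geq p_1 s_1^2 V=\frac{1}{144}>V_n$.

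The main obstacle is conceptual rather than computational: one must first combine Propositions~\ref{prop211} and~\ref{prop212} to confine the optimal set entirely to $J_1\uu[\tfrac 23,1]$, so that a lone point on either side really does have to serve a full half of the measure. Once this localization is in place, the contradiction is immediate from the identity $E(a(\go))=p_\go s_\go^2 V$ of Lemma~\ref{lemma4} and the numerical inequality $\frac{1}{144}>\frac{1}{648}$.
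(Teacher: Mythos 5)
Your proposal is correct and follows essentially the same route as the paper: bound $V_n\leq V_4\leq\frac{1}{648}$ using the four-point competitor from the proof of Proposition~\ref{prop212}, then derive a contradiction from the lower bound $p_1s_1^2V=\frac{1}{144}$ that a lone representative of either $J_1$ or $J_{(1,\infty)}$ would necessarily incur. The paper states the key lower bounds $V_n\geq\int_{J_1}(x-a(1))^2\,dP$ and $V_n\geq\int_{J_{(1,\infty)}}(x-a(1,\infty))^2\,dP$ somewhat tersely, implicitly leaning on Propositions~\ref{prop211} and~\ref{prop212} to localize the optimal set in $J_1\cup[\frac23,1]$ and to ensure a lone point really does carry a full half of the mass; you make that localization and the midpoint comparison explicit, which is a mild but welcome tightening rather than a different argument.
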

\begin{proof} As shown in the proof of Proposition~\ref{prop212}, since $V_n$ is the quantization error for $n$-means for $n\geq 4$, we have $V_n\leq V_4\leq \frac{1}{648}=0.00154321$. By Proposition~\ref{prop211}, we have $\te{card}(\ga_n\ii J_1)\geq 1$ and $\te{card}(\ga_n\ii [S_2(0), 1])\geq 1$. First, we show that $\te{card}(\ga_n\ii [S_2(0), 1])\geq 2$. Suppose that $\te{card}(\ga_n\ii [S_2(0), 1])=1$. Then,
\[V_n\geq \int_{J_{(1, \infty)}}(x-a(1, \infty))^2 dP=\frac{1}{144}=0.00694444>V_n,\]
which leads to a contradiction. So, we can assume that $\te{card}(\ga_n\ii [S_2(0), 1])\geq 2$ for $n\geq 4$. Next, suppose that $\te{card}(\ga_n\ii J_1)=1$. Then,
\[V_n\geq \int_{J_1}(x-a(1))^2 dP=\frac{1}{144}=0.00694444>V_n,\]
which leads to another contradiction. Thus, the lemma is established.
\end{proof}

\begin{prop} \label{prop214} Let $\ga_n$ be an optimal set of $n$-means for $P$ such that $\te{card}(\ga_n\ii [S_{k+1}(0), 1])\geq 2$ for some $k\in \D N$ and $n\in \D N$. Then, $\ga_n\ii J_{k+1}\neq \es$, $\ga_n\ii [S_{k+2}(0), 1]\neq \es$, and $\ga_n$ does not contain any point from the open interval $(S_{k+1}(1),  S_{k+2}(0))$.
Moreover, the Voronoi region of any point in $\ga_n\ii J_{k+1}$ does not contain any point from $[S_{k+2}(0), 1]$ and the Voronoi region of any point in $\ga_n\ii [S_{k+2}(0), 1]$ does not contain any point from $J_{k+1}$.
\end{prop}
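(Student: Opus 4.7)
I would prove Proposition~\ref{prop214} by induction on $k\in\D N$. The base case $k=0$ is the combined content of Propositions~\ref{prop211} and~\ref{prop212} (supplemented by Lemmas~\ref{lemma11} and~\ref{lemma12} for $n\le 3$), since $[S_1(0),1]=[0,1]$ makes the hypothesis reduce to $\te{card}(\ga_n)\ge 2$.

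For the inductive step, suppose the statement holds at every level $0,1,\ldots,k-1$. Since $[S_{j+1}(0),1]\ce[S_{k+1}(0),1]$ for each such $j$, the hypothesis $\te{card}(\ga_n\ii[S_{k+1}(0),1])\ge 2$ automatically gives $\te{card}(\ga_n\ii[S_{j+1}(0),1])\ge 2$ at every lower level, so the inductive hypothesis supplies Voronoi separation between $J_{j+1}$ and $[S_{j+2}(0),1]$ for each $j<k$. Telescoping these separations forces the Voronoi regions of every point of $\gb:=\ga_n\ii[S_{k+1}(0),1]$ to lie inside $[S_{k+1}(0),1]$ and those of $\ga_n\setminus\gb$ to lie inside $[0,S_{k+1}(0))$, so the quantization error of $\ga_n$ splits cleanly as the sum of its errors on the two subregions. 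A standard swap argument (comparing $\ga_n$ with $(\ga_n\setminus\gb)\uu B$ for an arbitrary subset $B\ci[S_{k+1}(0),1]$ of size $\te{card}(\gb)$) then forces $\gb$ to minimize the quantization error for $P|_{[S_{k+1}(0),1]}$ among all subsets of $[S_{k+1}(0),1]$ of the same cardinality.

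Finally, I would invoke the self-similarity of $P$: the affine map $\phi_k(x):=3^k(x-1+3^{-k})$ carries $[S_{k+1}(0),1]$ bijectively onto $[0,1]$, sends $J_{k+j}$ onto $J_j$, and since $P(J_{k+j})/p_{(k,\infty)}=1/2^j$, it pulls back $P|_{J_{(k,\infty)}}/p_{(k,\infty)}$ to $P$ itself on $[0,1]$; error integrals on the rescaled region transform to error integrals on $[0,1]$ by the factor $1/18^k$. Hence $\phi_k(\gb)$ is an optimal $\te{card}(\gb)$-means set for $P$ on $[0,1]$ with $\te{card}(\gb)\ge 2$, so by Propositions~\ref{prop211} and~\ref{prop212} (and Lemmas~\ref{lemma11},~\ref{lemma12} if $\te{card}(\gb)\in\set{2,3}$), $\phi_k(\gb)$ meets $J_1$ and $[2/3,1]$, avoids the open gap $(1/3,2/3)$, and has the stated Voronoi separation. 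Pulling back through $\phi_k^{-1}$, which sends $J_1\mapsto J_{k+1}$, $[2/3,1]\mapsto[S_{k+2}(0),1]$, and $(1/3,2/3)\mapsto(S_{k+1}(1),S_{k+2}(0))$, yields all four conclusions of Proposition~\ref{prop214} at level $k$. The main obstacle is the swap/decomposition step: showing rigorously that once the lower-level Voronoi separations are in hand, the quantization problem for $\ga_n$ restricted to $[S_{k+1}(0),1]$ is truly isolated as an optimal $\te{card}(\gb)$-means problem. The self-similarity identification is then routine, and Propositions~\ref{prop211}--\ref{prop212} provide the gap-avoidance and Voronoi-separation structure without any further numerical calculation.
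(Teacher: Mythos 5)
Your proposal is correct, and it takes a genuinely different route from the paper. The paper's proof first declares "it is enough to prove it for $k=1$, and then inductively the proposition will follow for all $k\geq 2$" --- but the inductive step for $k\geq 2$ is left completely implicit, and the $k=1$ case is then established by explicit numerical case analysis: separate arguments for $n=3$ (via Lemma~\ref{lemma12}), $n=4$, then $n=5,6,7$ ``similarly,'' then $n\geq 8$, each time constructing a comparison set $\gb$, computing a numerical upper bound on $V(P,\ga_n\ii[S_2(0),1])$, and deriving contradictions from explicit integrals like $\frac{1}{864}$, $\frac{3959}{5038848}$, $\frac{521}{5038848}$, etc. Your approach avoids all of this: you treat Propositions~\ref{prop211} and~\ref{prop212} as a ``$k=0$'' base case, use the telescoping Voronoi separations from levels $j<k$ to isolate the quantization subproblem on $[S_{k+1}(0),1]$, invoke a swap argument to show $\gb:=\ga_n\ii[S_{k+1}(0),1]$ is optimal for the conditional measure there, and then exploit the exact self-similarity $\phi_k^{-1}\circ S_j = S_{k+j}$ (which gives $\tilde P\circ\phi_k^{-1}=P$ with the $18^{-k}$ error scaling) to transport $\gb$ to an optimal $\te{card}(\gb)$-means set for $P$ on $[0,1]$ and read off the conclusion from Propositions~\ref{prop211}--\ref{prop212}. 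This buys two things: it eliminates the numerical bookkeeping entirely, and it makes the $k\to k+1$ induction fully explicit rather than a one-line assertion. The decomposition step you flag as the ``main obstacle'' does in fact go through: the inductive hypotheses at levels $0,\dots,k-1$ show the Voronoi region (with respect to $\ga_n$) of each $b\in\gb$ misses $J_1\uu\cdots\uu J_k$, and the Voronoi region of each $a\in\ga_n\setminus\gb$ misses $[S_{k+1}(0),1]$, so on $\operatorname{supp}P$ the distortion integral splits additively and the swap comparison with $(\ga_n\setminus\gb)\uu B$ is valid; since centroids of subsets of $[S_{k+1}(0),1]$ lie in $[S_{k+1}(0),1]$, the restriction of the competing sets $B$ to that interval costs nothing. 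One small point: the paper's convention has $\D N=\set{1,2,\dots}$, so your ``base case $k=0$'' is really the initialization of the induction by the combined content of Propositions~\ref{prop211} and~\ref{prop212}, which is exactly what you intended.
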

\begin{proof} To prove the proposition it is enough to prove it for $k=1$, and then inductively the proposition will follow for all $k\geq 2$. Fix $k=1$.
Suppose that $\te{card}(\ga_n \ii [S_2(0), 1])\geq 2$. By Lemma~\ref{lemma12}, it is clear that the proposition is true for $n=3$. We now prove  that the proposition is true for $n=4$.
Let $\ga_4:=\set{a_1, a_2, a_3, a_4}$ be an optimal set of four-means, such that $0< a_1<a_2<a_3<a_4< 1$. By Lemma~\ref{lemma213}, we have $\te{card}(\ga_4\ii J_1)=2$ and $\te{card}(\ga_4\ii [S_2(0),1])=2$.
Let $V(P, \ga_4\ii [S_2(0),1])$ be the quantization error contributed by the set $\ga_4\ii [S_2(0),1]$.
Let $\gb:=\set{a(11), a(11, \infty), a(2), a(2, \infty)}$. The distortion error due to the set $\gb\ii [S_2(0),1]:=\set{a(2), a(2, \infty)}$ is given by
\[\int_{[S_2(0),1]}\min_{a \in \gb\ii [S_2(0),1]}(x-a)^2dP=2 \int_{J_2}(x-a(2))^2 dP=\frac{1}{1296},\]
and so $V(P, \ga_4\ii [S_2(0),1])\leq \frac{1}{1296}=0.000771605$. Suppose that $\ga_4 \ii J_2=\es$, i.e., $S_2(1)<a_3$. Then,
\[V(P, \ga_4\ii [S_2(0),1])\geq \int_{J_2}(x-S_2(1))^2 dP=\frac{1}{864}=0.00115741>V(P, \ga_4\ii [S_2(0),1]),\]
 which is a contradiction. So, $a_3\leq S_2(1)$. We now show that $\ga_4\ii [S_3(0), 1]\neq \es$. Suppose that $\ga_4\ii [S_3(0), 1]=\es$. Then, $a_4<S_3(0)$, and so
\[V(P, \ga_4\ii [S_2(0),1])\geq \int_{J_3\uu J_4\uu J_5}(x-S_3(0))^2 dP=\frac{3959}{5038848}=0.000785695>V(P, \ga_4\ii [S_2(0),1]),\]
which leads to a contradiction. Therefore, $S_3(0)\leq a_4$. Since, $a_3\leq S_2(1)$ and $S_3(0)\leq a_4$, we can assume that $\ga_4$ does not contain any point from the open interval $(S_2(1), S_3(0))$. Since $\frac 12(a_3+a_4)\geq \frac 12(\frac 23+\frac 89)=\frac {7}{9}=S_2(1)$, the Voronoi region of any point in $\ga_4\ii [S_3(0),1]$ does not contain any point from $J_2$. If the Voronoi region of any point in $\ga_4\ii J_2$ contains points from $[S_3(0),1]$, we must have $\frac 12(a_3+a_{4})>\frac 89$ implying $a_{4}>\frac {16}9-a_3\geq \frac {16}9-\frac 79=1$,
which leads to a contradiction. Hence, the Voronoi region of any point in $\ga_4\ii J_2$ does not contain any point from $[S_3(0),1]$.
Thus, the proposition is true for $n=4$. Similarly, we can prove that the proposition is true for $n=5, 6, 7$. We now prove that the proposition is true for all $n\geq 8$.
Let $\ga_n:=\set{a_1, a_2, \cdots, a_n}$ be an optimal set of $n$-means for any $n\geq 8$ such that $0<a_1<a_2<\cdots<a_n<1$. Let $V(P, \ga_n\ii [S_2(0),1])$ be the quantization error contributed by the set $\ga_n\ii [S_2(0),1]$. Set $\gb:=\set{a(111), a(111, \infty), a(12), a(12, \infty), a(21), a(21, \infty), a(3), a(3, \infty)}$. The distortion error due to the set $\gb\ii [S_2(0),1]:=\set{a(21), a(21, \infty), a(3), a(3, \infty)}$ is given by
\[\int_{[S_2(0),1]}\min_{a \in \gb\ii [S_2(0),1]}(x-a)^2dP=2\Big(E(a(21))+E(a(3))\Big)=\frac{1}{11664},\]
and so $V(P, \ga_n\ii [S_2(0),1])\leq\frac{1}{11664}=0.0000857339$.
Suppose that $\ga_n$ does not contain any point from $J_2$. Since by Proposition~\ref{prop211}, the Voronoi region of any point from $\ga_n\ii J_1$ does not contain any point from $[S_2(0),1]$, we have
 \[V(P, \ga_n\ii [S_2(0),1])\geq \int_{J_2}(x-\frac{7}{9})^2 dP=\frac{1}{864}=0.00115741>V(P, \ga_n\ii [S_2(0),1]),\]
 which leads to a contradiction. So, we can assume that $\ga_n\ii J_2\neq \es$. Suppose that $\ga_n\ii [S_{3}(0), 1]= \es$.
Then, $a_n<S_3(0)$, and so
\[V(P, \ga_n\ii [S_2(0),1])\geq \int_{J_3\uu J_4}(x-S_3(0))^2 dP=\frac{131}{279936}=0.000467964>V(P, \ga_n\ii [S_2(0),1]),\]
which gives another contradiction. Therefore, $\ga_n\ii [S_{3}(0), 1]\neq \es$. We now show that $\ga_n\ii (S_2(1), S_3(0))=\es$.
Let $j:=\max\set{i : a_i \leq S_2(1) \te{ for all } 1\leq i\leq n}$, and so $a_j\leq \frac{7}{9}=S_2(1)$. Suppose that $\frac 79<a_{j+1}<\frac 89$.
 Then, the following two cases can arise:

Case~1. $\frac 79<a_{j+1}\leq \frac{5}{6}.$

Then, $\frac 12(a_{j+1}+a_{j+2})>\frac 89$ implying $a_{j+2}>\frac {16}{9}-a_{j+1}\geq \frac {16}{9}-\frac {5}{6}=\frac{17}{18}$, and so
\[V(P, \ga_n\ii [S_2(0),1]) \geq \int_{J_3}(x-\frac{17}{18})^2 dP=\frac{1}{5184}=0.000192901>V(P, \ga_n\ii J_{(1, \infty)}),\]
which is contradiction.

Case~2. $\frac{5}{6} \leq a_{j+1}< \frac 89.$

Then, $\frac 12(a_{j}+a_{j+1})<\frac 79$ implying $a_{j}<\frac {14}9-a_{j+1}\leq \frac {14}9-\frac{5}{6}=\frac{13}{18}<S_{22}(0)$, and so
\[V(P, \ga_n\ii J_{(1, \infty)})\geq \int_{J_{22}\uu J_{23}}(x-\frac{13}{18})^2 dP=\frac{521}{5038848}=0.000103397>V(P, \ga_n\ii J_{(1, \infty)}),\]
which gives a contradiction.
Therefore, $\ga_n\ii (S_{2}(1),  S_{3}(0))\neq \es$. Proceeding similarly, as shown for $n=4$, in this case we can also show that the Voronoi region of any point in $\ga_n\ii J_{2}$ does not contain any point from $[S_{3}(0), 1]$ and the Voronoi region of any point in $\ga_n\ii [S_{3}(0), 1]$ does not contain any point from $J_{2}$.
Thus, the proof of the proposition is complete.
\end{proof}

\begin{prop}\label{prop215}
Let $\ga_n$ be an optimal set of $n$-means for $n\geq 2$. Then, there exists a positive integer $k$ such that  $\ga_n\ii J_j \neq \es$ for all $1\leq j\leq k$, and $\te{card}(\ga_n\ii [S_{k+1}(0), 1])=1$. Moreover, if $n_j:=\te{card}(\ga_j)$, where $\ga_j:=\ga_n\ii J_j$, then $n=\sum_{j=1}^k n_j+1$, with
\[V_n=
\mathop\sum\limits_{j=1}^k p_js_j^2 V_{n_j}+p_k s_k^2 V.\]
\end{prop}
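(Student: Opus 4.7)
The plan is to iterate Proposition~\ref{prop214} down through successive tails $[S_{m+1}(0),1]$ until the tail contains only a single point of $\ga_n$, and then decouple the distortion using the Voronoi separation that Proposition~\ref{prop214} also provides. Define
\[k:=\min\{m\geq 1:\te{card}(\ga_n\cap [S_{m+1}(0),1])\leq 1\}.\]
I first argue this set is non-empty: at each index $m$ where the cardinality is $\geq 2$, Proposition~\ref{prop214} forces a point of $\ga_n$ into the interval $J_{m+1}$; since the intervals $J_2,J_3,\ldots$ are pairwise disjoint and $|\ga_n|=n$ is finite, the condition must fail at some $m\leq n$. I then verify that the cardinality at level $k$ is exactly $1$ (not $0$): for $k\geq 2$, Proposition~\ref{prop214} at index $k-1$ gives $\ga_n\cap [S_{k+1}(0),1]\neq\es$, while for $k=1$ the same conclusion follows from Proposition~\ref{prop211}.

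Next I extract the block structure. For each $1\leq j\leq k-1$, minimality of $k$ gives $\te{card}(\ga_n\cap [S_{j+1}(0),1])\geq 2$, so Proposition~\ref{prop214} applied at index $j$ yields $\ga_n\cap J_{j+1}\neq\es$, no point of $\ga_n$ in the gap $(S_{j+1}(1),S_{j+2}(0))$, and non-mixing of the Voronoi regions on either side of that gap. Combined with Proposition~\ref{prop211} (giving $\ga_n\cap J_1\neq\es$) and Proposition~\ref{prop212} (ruling out points in $(1/3,2/3)=(S_1(1),S_2(0))$), I obtain $\ga_n\cap J_j\neq\es$ for every $1\leq j\leq k$, the absence of points of $\ga_n$ in every gap $(S_j(1),S_{j+1}(0))$ for $1\leq j\leq k$, and hence the disjoint decomposition
\[\ga_n=\bigcup_{j=1}^k(\ga_n\cap J_j)\,\cup\,(\ga_n\cap [S_{k+1}(0),1]),\]
which yields $n=\sum_{j=1}^k n_j+1$ with $n_j:=\te{card}(\ga_n\cap J_j)\geq 1$.

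Using the Voronoi separation just obtained, $V_n$ splits as a sum of $k$ block integrals over $J_1,\ldots,J_k$ plus the tail integral over $[S_{k+1}(0),1]$. Applying Lemma~\ref{lemma1} with the change of variable $x=S_j(y)$ rewrites the $j$-th block integral as
\[p_js_j^2\int\min_{b\in S_j^{-1}(\ga_n\cap J_j)}(y-b)^2\,dP(y)\geq p_js_j^2 V_{n_j},\]
and equality is forced by optimality of $\ga_n$: otherwise, substituting $S_j(\gb)$ for $\ga_n\cap J_j$, with $\gb$ an optimal $n_j$-set for $P$, would strictly lower the total error, since Voronoi separation guarantees the other blocks are unaffected, contradicting optimality of $\ga_n$. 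For the tail, the centroid condition of Proposition~\ref{prop10}(iii) forces the unique point of $\ga_n\cap [S_{k+1}(0),1]$ to be the conditional mean $a(k,\infty)$, and Lemma~\ref{lemma4} evaluates the corresponding integral as $p_ks_k^2 V$. Adding everything yields the claimed formula.

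The delicate point, I expect, will be the discrete bookkeeping in the first two paragraphs: showing both that the iteration terminates and that at the terminating level the tail has cardinality exactly one, with the $k=1$ base case needing Proposition~\ref{prop211} rather than Proposition~\ref{prop214}. Once this block structure is in hand, the $V_n$-formula is essentially a direct consequence of self-similarity (Lemma~\ref{lemma1}) together with the closed-form tail error (Lemma~\ref{lemma4}).
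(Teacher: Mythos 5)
Your proof is correct and follows the same route as the paper's: iterate Propositions~\ref{prop211}, \ref{prop212}, and \ref{prop214} to obtain the block decomposition, then use self-similarity (Lemma~\ref{lemma1}) plus an exchange argument for optimality of each rescaled block and Lemma~\ref{lemma4} for the tail. Your version is somewhat more careful than the paper's, which invokes the ``Induction Principle'' informally; your explicit definition of $k$ as a minimum, the finite-termination argument via disjointness of the $J_j$, and the appeal to Proposition~\ref{prop10}(iii) to pin the tail point to $a(k,\infty)$ make the bookkeeping airtight where the paper merely asserts it.
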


\begin{proof} By Proposition~\ref{prop211} and Proposition~\ref{prop212}, we see that if $\ga_n$ is an optimal set of $n$-means for $n\geq 2$, then $\ga_n\ii J_1\neq \es$, $\ga_n\ii [S_2(0), 1]\neq \es$, and $\ga_n$ does not contain any point from the open interval $(S_1(1), S_2(0))$. Proposition~\ref{prop214} says that if $\te{card}(\ga_n\ii [S_{k+1}(0), 1])\geq 2$ for some $k\in \D N$, then $\ga_n\ii J_{k+1}\neq \es$ and $\ga_n\ii [S_{k+1}(0), 1])\neq \es$. Moreover, $\ga_n$ does not take any point from the open interval $(S_{k+1}(1), S_{k+2}(0))$. Thus, by Induction Principle, we can say that if $\ga_n$ is an optimal set of $n$-means for $n\geq 2$, then there exists a positive integer $k$ such that $\ga_n\ii J_j\neq \es$ for all $1\leq j\leq k$ and $\te{card}(\ga_n\ii [S_{k+1}(0), 1])=1$.

For a given $n\geq 2$, write $\ga_j:=\ga_n\ii J_j$ and $n_j:=\te{card}(\ga_j)$. Since $\ga_j$ are disjoints for $1\leq j\leq k$, and $\ga_n$ does not contain any point from the open intervals $(S_{\ell}(1), S_{\ell+1}(0))$ for $1\leq \ell\leq k$, we have $\ga_n=\mathop{\uu}\limits_{j=1}^k\ga_j\uu \set{a(k, \infty)}$ and $n=n_1+n_2+\cdots +n_k+1$. Then, using Lemma~\ref{lemma1}, we deduce
\begin{align*} &V_n=\int \min_{a \in \ga_n} (x-a)^2 dP=\sum_{j=1}^{k} \int_{J_j}\min_{a \in \ga_j} (x-a)^2 dP+\int_{J_{(k, \infty)}}(x-a(k, \infty))^2 dP\\
&=\sum_{j=1}^{k} p_j\int\min_{a \in \ga_j} (x-a)^2 dP\circ S_j^{-1}(x)+\int_{J_{(k, \infty)}}(x-a(k, \infty))^2 dP,
\end{align*}
which yields
\begin{equation} \label{eq33} V_n=\sum_{j=1}^{k}p_js_j^2\int\min_{a \in S_j^{-1}(\ga_j)} (x-a)^2 dP+ p_k s_k^2 V.\end{equation}
We now show that $S_j^{-1}(\ga_j)$ is an optimal set of $n_j$-means, where $1\leq j\leq k$. If $S_j^{-1}(\ga_j)$ is not an optimal set of $n_j$-means, then we can find a set $\gb \sci \D R$ with $\te{card}(\gb)=n_j$ such that $\int\mathop{\min}\limits_{b\in \gb} (x-b)^2 dP<\int \mathop{\min}\limits_{a\in S_j^{-1}(\ga_j)}(x-a)^2 dP$. But, then $S_j(\gb) \uu (\ga_n\setminus \ga_j)$ is a set of cardinality $n$ such that
\[\int\mathop{\min}\limits_{a\in S_j(\gb) \uu (\ga_n\setminus \ga_j)}(x-a)^2 dP<\int\mathop{\min}\limits_{a\in \ga_n} (x-a)^2 dP,\] which contradicts the optimality of $\ga_n$. Thus, $S_j^{-1}(\ga_j)$ is an optimal set of $n_j$-means for $1\leq j\leq k$. Hence, by \eqref{eq33}, we have
\[V_n=\sum_{j=1}^{k}p_js_j^2V_{n_j}+ p_k s_k^2 V.\]
Thus, the proof of the proposition is complete.
\end{proof}

\begin{prop} \label{prop216}
Let $\ga_n$ be an optimal set of $n$-means for $n\geq 2$. Then, for $c\in \ga_n$, we have $c=a(\go)$, or $c=a(\go, \infty)$ for some $\go \in \D N^\ast$.
\end{prop}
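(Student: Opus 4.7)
The plan is to argue by strong induction on $n \geq 2$, with Proposition~\ref{prop215} as the structural engine. That proposition produces, for any optimal $\ga_n$, an integer $k \in \D N$ such that $\ga_n = \uu_{j=1}^k \ga_j \uu \set{c_k}$, where $\ga_j := \ga_n \ii J_j$ is nonempty with $n_j := \te{card}(\ga_j)$, the set $S_j^{-1}(\ga_j)$ is an optimal set of $n_j$-means for $P$, and $c_k$ is the unique element of $\ga_n \ii [S_{k+1}(0),1]$. It therefore suffices to identify $c_k$ explicitly and to describe every element of each $\ga_j$.

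To identify $c_k$, I would invoke Proposition~\ref{prop211} when $k=1$ and Proposition~\ref{prop214} applied with index $k-1$ when $k \geq 2$ (valid because $\ga_n \ii J_k\ne\es$ together with $c_k \in [S_{k+1}(0),1]$ yields $\te{card}(\ga_n\ii[S_k(0),1])\ge 2$). In either case the Voronoi region of $c_k$ in $\ga_n$ does not meet $J_k$, so its left-hand boundary, namely the midpoint between $c_k$ and its nearest left neighbor in $\ga_n$, lies in the closed interval $[S_k(1), S_{k+1}(0)]$. Since the support of $P$ is disjoint from the open gap $(S_k(1), S_{k+1}(0))$, the intersection of this Voronoi region with the support of $P$ coincides with $J_{(k,\infty)}$, so Proposition~\ref{prop10}(iii) together with \eqref{eq1} force $c_k = E(X \mid X \in J_{(k,\infty)}) = a(k,\infty)$, a point of the required form with $\go=k \in \D N^\ast$.

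For the remaining points, I would apply the inductive hypothesis to each $S_j^{-1}(\ga_j)$. If $n_j = 1$, the sole element of $\ga_j$ is the centroid of $J_j$, which is $a(j) = S_j(\frac 12)$. If $n_j \geq 2$, then $n_j < n$, and the hypothesis gives that every element of $S_j^{-1}(\ga_j)$ is of the form $a(\gt)$ or $a(\gt,\infty)$ for some $\gt \in \D N^\ast$. Lifting via $S_j$ then reduces to the two identities $S_j(a(\gt)) = a(j\gt)$ and $S_j(a(\gt,\infty)) = a(j\gt,\infty)$. The first is immediate from $a(\gt) = S_\gt(\frac 12)$ and $S_j \circ S_\gt = S_{j\gt}$. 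The second is a short affine computation: writing $S_j(x) = s_j x + S_j(0)$ and using the expression for $a(\gt,\infty)$ in \eqref{eq1}, together with the word-level identities $(j\gt)^- = j\gt^-$ and $(j\gt)_{|j\gt|} = \gt_{|\gt|}$, produces the claimed right-hand side.

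I expect the main obstacle to be the notational bookkeeping in the second lifting identity, particularly when $\gt$ has length one so that $\gt^-$ is the empty word and the indexing convention must be handled by hand. The geometric step identifying $c_k$, while the heart of the argument, ultimately reduces to the simple fact that the support of $P$ omits the middle-third gap separating $J_k$ from $J_{(k,\infty)}$, which is built into the Cantor construction. Assembling these pieces exhibits every element of $\ga_n$ in the form $a(\go)$ or $a(\go,\infty)$ with $\go \in \D N^\ast$, completing the induction.
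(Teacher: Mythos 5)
Your proof is correct and follows essentially the same route as the paper: both decompose $\ga_n$ via Proposition~\ref{prop215} into the pieces $\ga_n\ii J_j$ together with the rightmost point $a(k,\infty)$ and then recurse on the self-similar structure. The paper unrolls the recursion by descending layer by layer into $J_{j_1}$, $J_{j_1j_2}$, $\ldots$ using that similarities preserve ratios of distances, while you package it as a strong induction on $n$ invoking the optimality of $S_j^{-1}(\ga_j)$; these are the same argument in substance.
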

\begin{proof} Let $\ga_n$ be an optimal set of $n$-means for $n\geq 2$ such that $c\in \ga_n$.
By Proposition~\ref{prop215}, there exists a positive integer $k_1$ such that  $\ga_n\ii J_{j_1}\neq \es$ for $1\leq j_1\leq k_1$,  and $\te{card}(\ga_n \ii [S_{k_1+1}(0), 1])=1$, and $\ga_n$ does not contain any point from the open intervals $(S_{\ell}(1), S_{\ell+1}(0))$ for $1\leq \ell\leq k_1$. If $c\in \ga_n \ii [S_{k_1+1}(0), 1]$, then $c=a(k_1, \infty)$. If $c\in \ga_n\ii J_{j_1}$ for some $1\leq j_1\leq k_1$ with $\te{card}(\ga_n\ii J_{j_1})=1$, then $c=a(j_1)$. Suppose that $c \in \ga_n\ii J_{j_1}$ for some $1\leq j_1\leq k_1$ and $\te{card}(\ga_n\ii J_{j_1})\geq 2$. Then, as similarity mappings preserve the ratio of the distances of a point from any other two points, using Proposition~\ref{prop215} again, there exists a positive integer $k_2$ such that  $\ga_n\ii J_{j_1j_2}\neq \es$ for $1\leq j_2\leq k_2$, and $\te{card}(\ga_n \ii [S_{j_1(k_2+1)}(0), 1])=1$, and $\ga_n$ does not contain any point from the open intervals $(S_{j_1\ell}(1), S_{j_1(\ell+1)}(0))$ for $1\leq \ell\leq k_2$. If $c\in \ga_n \ii [S_{j_1(k_2+1)}(0), 1]$ then $c=a(j_1k_2, \infty)$. Suppose that $c\in \ga_n\ii J_{j_1j_2}$ for some $1\leq j_2\leq k_2$. If $\te{card}(\ga_n\ii J_{j_1j_2})=1$, then $c=a(j_1j_2)$. If $\te{card}(\ga_n\ii J_{j_1j_2})\geq 2$, proceeding inductively as before, we can find a word $\go \in \D N^\ast$, such that either $c \in \ga_n\ii J_\go$ with $\te{card}(\ga_n\ii J_\go)=1$ implying $c=a(\go)$, or  $c\in \ga_n\ii [S_{\go^-(\go_{|\go|}+1)}, 1]$ with $\te{card}(\ga_n\ii [S_{\go^-(\go_{|\go|}+1)}, 1])=1$ implying $c=a(\go, \infty)$. Thus, the proof of the proposition is complete.
\end{proof}

\begin{prop} \label{prop216}
For any $n\geq 2$, let $\ga_n$ be an optimal set of $n$-means with respect to the probability distribution $P$. Write
\begin{align*}
&W(\ga_n):=\set{\go\in \D N^\ast : a(\go) \te{ or } a(\go, \infty) \in \ga_n}, \te{ and }\\
& \tilde W(\ga_n):=\set{\gt \in W(\ga_n) : p_\gt s_\gt^2 \geq p_\go s_\go^2 \te{ for all } \go \in W(\ga_n)}.
\end{align*}
Then, for any  $\gt\in \tilde W(\ga_n)$ the set $\ga_{n+1}:=\ga_{n+1}(\gt)$, where
\[\ga_{n+1}(\gt)=\left\{\begin{array}{ll}
(\ga_n\setminus \set{a(\gt)})\uu \set{a(\gt1), a(\gt1, \infty)} \te{ if } a(\gt) \in \ga_n,  & \\
(\ga_n\setminus \set{a(\gt, \infty)})\uu \set{a(\gt^-(\gt_{|\gt|}+1)), a(\gt^-(\gt_{|\gt|}+1), \infty)} \te{ if } a(\gt, \infty) \in \ga_n,  &
\end{array}\right.\]
is an optimal set of $(n+1)$-means.
\end{prop}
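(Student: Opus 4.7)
The plan is to prove the proposition in two halves: an upper bound $V(\ga_{n+1}(\gt)) = V_n - \tfrac{8}{9} p_\gt s_\gt^2 V$ via a direct computation using Lemma~\ref{lemma4}, and a matching lower bound $V_{n+1} \geq V_n - \tfrac{8}{9} V \max_{\go \in W(\ga_n)} p_\go s_\go^2$ via strong induction on $n$ built from Propositions~\ref{prop215} and~\ref{prop216}.

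For the upper bound, I would treat the case $a(\gt) \in \ga_n$ first; the case $a(\gt,\infty) \in \ga_n$ is entirely parallel because for $\gt' := \gt^-(\gt_{|\gt|}+1)$ one has $p_{\gt'} = p_\gt/2$ and $s_{\gt'} = s_\gt/3$, exactly matching $p_{\gt 1} = p_\gt/2$ and $s_{\gt 1} = s_\gt/3$. The set $\ga_{n+1}(\gt)$ agrees with $\ga_n$ outside $J_\gt$, and inside $J_\gt$ replaces the single codevector $a(\gt) = S_\gt(\tfrac12)$ by the two codevectors $a(\gt 1) = S_\gt(\tfrac16)$ and $a(\gt 1,\infty) = S_\gt(\tfrac56)$, whose common midpoint $S_\gt(\tfrac12)$ lies in the gap $S_\gt([\tfrac13,\tfrac23])$ between $J_{\gt 1}$ and $J_{\gt 2}$, so that their Voronoi cells restricted to $J_\gt$ are exactly $J_{\gt 1}$ and $J_{(\gt 1,\infty)}$. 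By Lemma~\ref{lemma4}, the contribution of $J_\gt$ to the distortion changes from $E(a(\gt)) = p_\gt s_\gt^2 V$ to $E(a(\gt 1)) + E(a(\gt 1,\infty)) = 2 p_{\gt 1} s_{\gt 1}^2 V = \tfrac19 p_\gt s_\gt^2 V$, giving $V(\ga_{n+1}(\gt)) = V_n - \tfrac89 p_\gt s_\gt^2 V$, and choosing $\gt \in \tilde W(\ga_n)$ maximizes the reduction over all single-step refinements.

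For the lower bound I would argue by strong induction on $n$, with base case $n=2$ supplied by Lemma~\ref{lemma12} (for $\ga_2 = \set{a(1),a(1,\infty)}$ the weights $p_1 s_1^2$ coincide, so $\tilde W(\ga_2) = W(\ga_2)$ and either split reproduces Lemma~\ref{lemma12}). In the inductive step, let $\ga_{n+1}$ be any optimal $(n+1)$-set. By Proposition~\ref{prop216}, every codevector has the form $a(\go)$ or $a(\go,\infty)$, and by Proposition~\ref{prop215} there is an index $k$ with $\ga_{n+1} = \bigcup_{j=1}^k \ga^{(j)} \cup \set{a(k,\infty)}$ such that each $S_j^{-1}(\ga^{(j)})$ is an optimal $n_j$-means, $\sum n_j = n$. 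Applying the inductive hypothesis to the sub-problems identifies the branch corresponding to the \emph{last} split and produces, upon pulling back through the relevant $S_j$ and recombining with the remaining (optimal) sub-sets, an optimal $\ga_n$ and some $\gt \in W(\ga_n)$ with $\ga_{n+1} = \ga_{n+1}(\gt)$. The upper bound just established then forces $\gt \in \tilde W(\ga_n)$; otherwise $V(\ga_{n+1}) > V_n - \tfrac89 V \max_{\go \in W(\ga_n)} p_\go s_\go^2$, contradicting the optimality of $\ga_{n+1}$.

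The main obstacle is this inductive reconstruction: matching the Proposition~\ref{prop215} decomposition of $\ga_{n+1}$ to that of an optimal $\ga_n$ requires care because the two decompositions may have different truncation indices $k$ and must be compared term by term, accounting for the possibility that the split occurs in a branch whose presence in $\ga_n$'s decomposition differs from its presence in $\ga_{n+1}$'s. What ultimately makes the greedy argument canonical is the algebraic identity $p_{\gt 1} s_{\gt 1}^2 = p_{\gt^-(\gt_{|\gt|}+1)} s_{\gt^-(\gt_{|\gt|}+1)}^2 = \tfrac{1}{18} p_\gt s_\gt^2$: both admissible single-step splits reduce the distortion by exactly $\tfrac89 p_\gt s_\gt^2 V$, so the optimal choice depends only on the weight $p_\go s_\go^2$, and $\tilde W(\ga_n)$ is the unambiguous criterion for optimal one-step refinement.
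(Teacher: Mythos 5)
Your upper-bound half is correct and is essentially the same algebra as the paper's key ``claim''. The paper does not phrase it as a distortion reduction $V(\ga_{n+1}(\gt)) = V_n - \tfrac{8}{9}p_\gt s_\gt^2 V$; instead it states that for any $\go,\gt$,
\[
E(a(\gt1)) + E(a(\gt1,\infty)) + E(a(\go)) \;\le\; E(a(\gt)) + E(a(\go1)) + E(a(\go1,\infty))
\quad\Longleftrightarrow\quad p_\gt s_\gt^2 \ge p_\go s_\go^2,
\]
which by Lemma~\ref{lemma4} reduces to exactly the $\tfrac19 p_\gt s_\gt^2 V$ vs.\ $p_\gt s_\gt^2 V$ comparison you derive, and you have also correctly verified that the $a(\gt,\infty)$ case gives the identical factor $\tfrac{1}{18}p_\gt s_\gt^2$ via $p_{\gt^-(\gt_{|\gt|}+1)} = p_\gt/2$, $s_{\gt^-(\gt_{|\gt|}+1)} = s_\gt/3$. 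So that half is solid and matches the paper's calculation.

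The issue is the lower bound, and you have yourself put your finger on it. Your sketch says that applying the inductive hypothesis to the sub-problems supplied by Proposition~\ref{prop215} ``identifies the branch corresponding to the last split and produces, upon pulling back through the relevant $S_j$ and recombining\dots, an optimal $\ga_n$ and some $\gt\in W(\ga_n)$ with $\ga_{n+1} = \ga_{n+1}(\gt)$''. This is precisely the statement ``every optimal $(n+1)$-set is a one-step refinement of some optimal $n$-set'', and nothing in your argument actually establishes it: the two decompositions of Proposition~\ref{prop215} for $\ga_{n+1}$ and for a hypothetical $\ga_n$ can have different truncation indices $k$, the sub-cardinalities $n_j$ need not line up, and the inductive hypothesis only applies inside a single cylinder $J_j$ after conjugating by $S_j$, not to the outermost structure where the $\set{a(k,\infty)}$ tail lives. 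Until that reconstruction is carried out in detail, the lower bound $V_{n+1}\ge V_n - \tfrac{8}{9}V\max_{\go\in W(\ga_n)}p_\go s_\go^2$ is unproved and the optimality claim does not follow. You explicitly flag this as ``the main obstacle'', which is exactly right, but the proposal stops there.

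For comparison, the paper's own proof does not split into an upper and a lower bound at all. It runs a greedy induction: having fixed an optimal $\ga_m$, it uses the claim to show that among the $m$ possible one-step refinements $\set{\ga_{m+1}(\gt):\gt\in W(\ga_m)}$, the distortion is minimized precisely when $\gt\in\tilde W(\ga_m)$, and then asserts that this minimizer is an optimal $(m+1)$-set. The step from ``best among single-step refinements of a fixed optimal $\ga_m$'' to ``globally optimal among all $(m+1)$-point sets'' is not argued in the body of that proof either; it rests implicitly on the structural Propositions~\ref{prop215}--\ref{prop216} which guarantee that optimal sets consist only of points $a(\go)$, $a(\go,\infty)$ with a nested cylinder decomposition. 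So the obstacle you identify is genuine and is the same place where the paper is terse. Your approach is in principle the more explicit way to close it, but as written it is a plan for a lower-bound proof rather than a proof, and the reconstruction step would need to be spelled out carefully (in particular, how the outer index $k$ and the tail term $a(k,\infty)$ behave when passing from $\ga_{n+1}$ to a candidate $\ga_n$) before the argument is complete.
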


\begin{proof}
Let us first claim that for any $\go, \gt \in \D N^\ast$, $p_\gt s_\gt^2 \geq p_\go s_\go^2$ if and only if
\begin{align*} E(a(\gt1)) +E(a(\gt1, \infty))+E(a(\go))\leq E(a(\gt))+E(a(\go1))+E(a(\go1, \infty)).\end{align*}
By Lemma~\ref{lemma4}, we have
\begin{align*} &LHS=2p_{\gt1}s_{\gt1}^2 V +p_\go s_\go^2 V=\frac{1}{9}p_{\gt}s_{\gt}^2 V +p_\go s_\go^2 V,\\
&RHS=p_{\gt}s_{\gt}^2 V +2 p_{\go1} s_{\go1}^2 V=p_{\gt}s_{\gt}^2 V +\frac{1}{9}p_\go s_\go^2 V.
\end{align*}
Thus, $LHS\leq RHS$ if and only if $p_\gt s_\gt^2 \geq p_\go s_\go^2$, which is the claim.

We now prove the proposition by induction. By Lemma~\ref{lemma11}, we know that the optimal set of two-means is $\ga_2=\set{a(1), a(1, \infty)}$. Here $\tilde W(\ga_2)=W(\ga_2)=\set{1}$. Since $a(1) \in \ga_2$, we have $\ga_3=\set{a(11), a(11, \infty), a(1, \infty)}$. Again, as $a(1, \infty) \in \ga_2$, we have $\ga_3=\set{a(1), a(2), a(2, \infty)}$. Clearly by Lemma~\ref{lemma12}, the sets $\ga_3$ are optimal sets of three-means. Thus, the proposition is true for $n=2$.
Let us now assume that $\ga_m$ is an optimal set of $m$-means for some $m\geq 2$. Write
\begin{align*}
&W(\ga_m):=\set{\go\in \D N^\ast : a(\go) \te{ or } a(\go, \infty) \in \ga_m}, \te{ and }\\
& \tilde W(\ga_m):=\set{\gt \in W(\ga_m) : p_\gt s_\gt^2 \geq p_\go s_\go^2 \te{ for all } \go \in W(\ga_m)}.
\end{align*}
If $\gt \not \in \tilde W(\ga_m)$, i.e., if  $\gt \in  W(\ga_m)\setminus \tilde W(\ga_m)$, then by the claim, if $a(\gt) \in \ga_m$ the error
\[\int\min \set{(x-a)^2 : a\in (\ga_m\setminus \{a(\gt)\})\uu \{a(\gt 1), a(\gt1, \infty)\}}dP,\] or, if $ a(\gt, \infty) \in \ga_m$ the error
\[\int\min \set{(x-a)^2 : a\in (\ga_m\setminus \{a(\gt, \infty)\})\uu \{a(\gt^-(\gt_{|\gt|}+ 1)), a(\gt^-(\gt_{|\gt|}+ 1), \infty)\}}dP\]
is either equal or larger, in fact strictly larger if $n$ is not of the form $2^k$ for any positive integer $k$, than the corresponding error obtained in the case where $\gt\in \tilde W(\ga_m)$. Hence, for any  $\gt\in \tilde W(\ga_n)$ the set $\ga_{m+1}:=\ga_{m+1}(\gt)$, where
\[\ga_{m+1}(\gt)=\left\{\begin{array}{ll}
(\ga_m\setminus \set{a(\gt)})\uu \set{a(\gt1), a(\gt1, \infty)} \te{ if } a(\gt) \in \ga_m,  & \\
(\ga_m\setminus \set{a(\gt, \infty)})\uu \set{a(\gt^-(\gt_{|\gt|}+1)), a(\gt^-(\gt_{|\gt|}+1), \infty)} \te{ if } a(\gt, \infty) \in \ga_m,  &
\end{array}\right.\]
is an optimal set of $(m+1)$-means. Thus, by the principle of mathematical induction, the proposition is true for all positive integers $n\geq 2$.
\end{proof}

\begin{lemma} \label{lemma14} Let $n\in \D N$ be such that $n=2^k$ for some $k\geq 1$. Then,
\[\ga(k):=\set{a(\go) : p_\go=\frac{1}{2^k}}\uu \set{a(\go, \infty) : p_\go=\frac1{2^k}}\] is an optimal set of $n$-means.
Set $\ga_j(k):=\ga(k)\ii J_j$ for $1\leq j\leq k$. Then, $S_j^{-1}(\ga_j(k))$ is an optimal set of $2^{k-j}$-means for $1\leq j\leq k$.  Moreover, $n=\sum_{j=1}^k 2^{k-j}+1$ and \[V_n=\sum_{j=1}^k  \frac 1{18^j} V_{2^{k-j}}+\frac 1{18^k} V_1.\]
\end{lemma}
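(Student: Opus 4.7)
The plan is induction on $k$. For the base case $k=1$, Lemma~\ref{lemma11} gives that $\ga(1)=\set{a(1),a(1,\infty)}=\set{\tfrac16,\tfrac56}$ is an optimal set of $2^1=2$ means; here $\ga_1(1)=\set{a(1)}$ has $2^{0}=1$ element, $S_1^{-1}(\ga_1(1))=\set{\tfrac12}=\set{E(X)}$ is trivially optimal for one-mean, and the asserted formula reduces to $V_2=\tfrac{1}{18}V_1+\tfrac{1}{18}V_1=\tfrac{2}{18}\cdot\tfrac18=\tfrac{1}{72}$, matching Lemma~\ref{lemma11}.

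For the inductive step, assume $\ga(k)$ is optimal for $2^k$-means. I would iterate the splitting step of Proposition~\ref{prop216} exactly $2^k$ times, starting from $\ga(k)$, to construct an optimal set of $2^{k+1}$-means which I then identify with $\ga(k+1)$. The key structural observation is that $p_\go=1/2^k$ forces $\go_1+\cdots+\go_{|\go|}=k$, hence $s_\go=1/3^k$, so $p_\go s_\go^2=1/18^k$ is the \emph{same} for every $\go\in W(\ga(k))$; consequently $\tilde W(\ga(k))=W(\ga(k))$ and every word is eligible to be split first. Since each split produces two new points indexed by a word of sum $k+1$ with $ps^2=1/18^{k+1}<1/18^k$, the unsplit sum-$k$ words continue to dominate $\tilde W$ throughout the process, so the $2^k$ splits can be performed in any order and Proposition~\ref{prop216} guarantees optimality at every intermediate stage.

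To verify that the terminal set is $\ga(k+1)$, I would use a bookkeeping bijection. Each of the $2^{k-1}$ words $\gt$ of sum $k$ contributes both $a(\gt)$ and $a(\gt,\infty)$ to $\ga(k)$, giving the $2^k$ initial points; splitting $a(\gt)$ adds $\set{a(\gt 1),a(\gt 1,\infty)}$ (indexed by $\gt 1$), and splitting $a(\gt,\infty)$ adds $\set{a(\gt^-(\gt_{|\gt|}+1)),a(\gt^-(\gt_{|\gt|}+1),\infty)}$ (indexed by $\gt^-(\gt_{|\gt|}+1)$). Every word $\gamma$ of sum $k+1$ falls uniquely into one of the two cases $\gamma_{|\gamma|}=1$ (then $\gamma=\gt 1$ with $\gt=\gamma^-$) or $\gamma_{|\gamma|}\ge 2$ (then $\gamma=\gt^-(\gt_{|\gt|}+1)$ with $\gt=\gamma^-(\gamma_{|\gamma|}-1)$), and in either case $\gt$ has sum $k$. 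This bijection identifies the $2^{k+1}$ terminal points with the elements of $\ga(k+1)$, closing the induction.

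For the remaining claims I would apply Proposition~\ref{prop215} to the (now optimal) $\ga(k)$: since $\ga(k)\ii J_j\neq\es$ for $1\leq j\leq k$ and $\ga(k)\ii[S_{k+1}(0),1]=\set{a(k,\infty)}$, the proposition forces $k'=k$ there, yielding both optimality of each $S_j^{-1}(\ga_j(k))$ for $n_j$-means and the error decomposition. A direct count (compositions of $k-j$ for $j<k$, plus the singleton $\set{a(k)}$ at $j=k$) gives $n_j=2^{k-j}$, and the similitude identities $a(j\go')=S_j(a(\go'))$ together with $a(j\go',\infty)=S_j(a(\go',\infty))$ (the latter via \eqref{eq1} and the affinity $S_j(x)+s_jy=S_j(x+y)$) show $S_j^{-1}(\ga_j(k))=\ga(k-j)$, which is optimal for $2^{k-j}$-means by induction. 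Substituting $p_js_j^2=1/18^j$ into the error formula of Proposition~\ref{prop215} yields $V_n=\sum_{j=1}^{k}\tfrac{1}{18^j}V_{2^{k-j}}+\tfrac{1}{18^k}V_1$. The main obstacle I foresee is the bookkeeping in the iterative splitting step: verifying carefully that every intermediate application of Proposition~\ref{prop216} stays inside $\tilde W$ and that the $2^k$ new pairs together exhaust $\ga(k+1)$ without duplication.
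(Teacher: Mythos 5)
Your proof is correct and follows essentially the same route as the paper: induction on $k$, iterating the splitting step of Proposition~\ref{prop216} exactly $2^k$ times to pass from $\ga(k)$ to $\ga(k+1)$, and then identifying $S_j^{-1}(\ga_j(k))=\ga(k-j)$ to obtain the error decomposition. You fill in a few details the paper leaves implicit---namely that all sum-$k$ words share $p_\go s_\go^2=1/18^k$ so every intermediate split stays inside $\tilde W$, and the explicit bijection between split pairs and compositions of $k+1$---but the underlying argument is the same.
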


\begin{proof} Let us prove the lemma by induction. If $n=2$, i.e., when $k=1$,  by Lemma~\ref{lemma11}, we have $\ga(1)=\set{a(1), a(1, \infty)}=\set{a(\go) : p_\go=\frac 12}\uu \set{a(\go, \infty) : p_\go=\frac 1 2}$ which is an optimal set of two-means. Here $\ga_1(1)=\ga(1)\ii J_1=\set{a(1)}$. Notice that $\te{card}(\ga_1(1))=1$, and the set $S_1^{-1}(\ga_1)=\set{\frac 12}$ is an optimal set of one-mean. Moreover,
$V_2=\frac 1{18}V_1+\frac 1{18} V_1$. Thus, the lemma is true for $n=2$.
Let the lemma be true if $n=2^k$ for some $k=m$, where $m\in \D N$ and $m\geq 2$. We will show that it is also true for $k=m+1$. We have
\[\ga(m)= \set{a(\go) : p_\go=\frac{1}{2^m}}\uu \set{a(\go, \infty) : p_\go=\frac1{2^m}}.\]
List the elements of $\ga(m)$ as $a_1, a_2, \cdots, a_{2^m}$, i.e., $\ga(m)=\set{a_j : 1\leq j\leq 2^m}$. Construct the sets $A_j$ for $1\leq j\leq 2^m$ as follows:
\[
A_j:=\left\{\begin{array} {ll}
\set{a(\go1), a(\go1, \infty)} \te{ if } a_j=a(\go) \te{ for some } \go \in \D N^\ast, & \\
\set {a(\go^-(\go_{|\go|}+1)), a(\go^-(\go_{|\go|}+1), \infty)} \te{ if } a_j=a(\go, \infty) \te{ for some } \go \in \D N^\ast. &
\end{array}\right. \]
For $1\leq j\leq 2^m$, set $\ga_{2^m+j}=(\ga(m)\setminus \mathop{\uu}\limits_{k=1}^j\set{a_k})\uu A_1\uu A_2\uu \cdots \uu A_j.$
Since $\ga_{2^m}$ is an optimal set of $2^{m}$-means, by Proposition~\ref{prop216}, $\ga_{2^{m}+1}$ is an optimal set of $(2^m+1)$-means, which implies  $\ga_{2^m+2}$ is an optimal set of $(2^m+2)$-means, and thus proceeding inductively, we can say that
the set
\[\ga_{2^{m+1}}:=\ga_{2^m+2^m}=(\ga(m)\setminus \uu_{k=1}^{2^m}\set{a_k})\uu A_1\uu A_2\uu \cdots \uu A_{2^m}=A_1\uu A_2\uu \cdots \uu A_{2^m}\] is an optimal set of $2^{m+1}$-means. Notice that for any $\go \in \D N^\ast$ if $a(\go)$ or $a(\go, \infty) \in A_j$, then $p_\go=\frac 1{2^{m+1}}$, and so
\[\ga_{2^{m+1}}=\ga(m+1)=\set{a(\go) : p_\go=\frac{1}{2^{m+1}}}\uu \set{a(\go, \infty) : p_\go=\frac1{2^{m+1}}}.\]
Therefore, by using the principle of mathematical induction, we can say that the set $\ga(k)$ is an optimal set of $n$-means if $n\in \D N$ and $n=2^k$ for some $k\geq 1$.
To complete the rest of the proof, we proceed as follows:
For any $\go=\go_1\go_2\cdots \go_{|\go|} \in \D N^\ast$, we have  $a(\go):=S_\go(\frac 12) \in J_{\go_1}$. Again, from the definitions of $a(\go)$, $a(\go, \infty)$, if $a(\go) \in J_{\go_1}$ and $|\go|>1$, then $a(\go, \infty)\in J_{\go_1}$. Keeping $\go_1$ fixed, if $\go_1< k$, we see that there are $2^{k-\go_1-1}$ different $\gt \in \D N^\ast$ such that $p_{\go_1\gt}=\frac 1{2^{k}}$. Thus,  for any $\go=\go_1\go_2\cdots\go_{|\go|} \in \D N^\ast$ with $|\go|>1$ and $p_\go=\frac 1{2^{k}}$, the optimal set $\ga(k)$ contains $2^{k-\go_1}$ elements from $J_{\go_1}$; in other words, $\te{card}(\ga(k)\ii J_{\go_1})=2^{k-\go_1}$. If $|\go|=1$ and $p_\go=\frac 1{2^{k}}$, i.e., when $\go=k$, then $a(k) \in J_{k}$, i.e., $\ga(k)$ contains only one element from $J_{k}$. Besides, $\ga(k)$ contains the element $a(k, \infty)$.
Write $\ga_j(k):=\ga(k)\ii J_j$. Then, $\te{card}(\ga_j(k))=2^{k-j}$ for $1\leq j\leq k$. For any $1\leq j\leq k-1$, by the definition of the mappings,  we have
\begin{align*}
S_j^{-1}(\ga_j(k))&=\set{a(\go_{j+1} \cdots \go_{|\go|}) : p_{\go_{j+1} \cdots \go_{|\go|}}=\frac{1}{2^{k-j}}}\uu \set{a(\go_{j+1} \cdots \go_{|\go|}, \infty) : p_{\go_{j+1} \cdots \go_{|\go|}}=\frac1{2^{k-j}}},
\end{align*}
 and $ S_k^{-1}(\ga_k(k))=\set{\frac 12}$. Thus, for all $1\leq j\leq k$, we can see that $S_j^{-1}(\ga_j(k))=\ga(k-j)$. Hence, by the first part of the lemma,  for each $1\leq j\leq k$, the set $S_j^{-1}(\ga_j(k))$ is an optimal set of $2^{k-j}$-means.
Now,
\begin{align*} &V_n=\int \min_{a \in \ga(k)}  \|x-a\|^2 dP=\sum_{j=1}^{k} \int_{J_j}\min_{a \in \ga_j(k)} (x-a)^2 dP+\int_{J_{(k, \infty)}}(x-a(k, \infty))^2 dP\\
&=\sum_{j=1}^{k} p_j\int\min_{a \in \ga_j(k)} (x-a)^2 dP\circ S_j^{-1}(x)+\int_{J_k}(x-a(k))^2 dP,
\end{align*}
which yields
\[V_n=\sum_{j=1}^{k}\frac{1}{18^j} \int\min_{a \in S_J^{-1}(\ga_j(k))} (x-a)^2 dP+\frac 1{18^k} V_1=\sum_{j=1}^{k}\frac{1}{18^j}V_{2^{k-j}}+\frac 1{18^k}V_1.\]
Thus, the proof of the lemma is complete.
\end{proof}

\begin{remark}
The set $\ga(k))$ given by Lemma~\ref{lemma14} is a unique optimal set of $n$-means where $n=2^k$ for some $k\in \D N$.
\end{remark}

In regard to Lemma~\ref{lemma14} let us give the following example.
\begin{example} Take $n=16=2^4$. Then,
\begin{align*} &\ga(4)=\set{a(1111), a(1111, \infty), a(112), a(112, \infty), a(121), a(121, \infty), a(13), \\
& \qquad a(13, \infty), a(211), a(211, \infty), a(22), a(22, \infty), a(31), a(31, \infty), a(4), a(5, \infty)}.
\end{align*} Since, $\ga_j(4)=\ga(4)\ii J_j$ for $1\leq j\leq 4$, we have
\begin{align*} &\ga_1(4)=\set{a(1111), a(1111, \infty), a(112), a(112, \infty), a(121), a(121, \infty), a(13), a(13, \infty)},\\
&\ga_2(4)=\set{ a(211), a(211, \infty), a(22), a(22, \infty)},\\
&\ga_3(4)=\set{(31), a(31, \infty)},\\
&\ga_4(4)=\set{a(4)}.
\end{align*}
Here, $S_1^{-1}(\ga_1(4))=\set{a(111), a(111, \infty), a(12), a(12, \infty), a(21), a(21, \infty), a(3), a(3, \infty)}$ is an optimal set of $2^3$-means, $S_2^{-1}(\ga_2(4))=\set{a(11), a(11, \infty), a(2), a(2, \infty)}$ is an optimal set of $2^2$-means, $S_3^{-1}(\ga_3(4))=\set{a(1), a(1, \infty)}$ is an optimal set of $2$-means, and $S_4^{-1}(\ga_4(4))=\set{\frac 12}$ is an optimal set of one-mean. Moreover, we can see that
\[V_{16}=\frac 1{18} V_8+\frac 1{18^2} V_4+\frac 1{18^3} V_2+\frac 1{18^4} V_1+\frac {1}{18^4} V_1.\]
\end{example}

Let us now state and prove the main theorem of the paper.
\begin{theorem} \label{Th1}
For $n\in \D N$ with $n\geq 2$ let $\ell(n)\in \D N $ satisfy $2^{\ell(n)} \leq n<2^{\ell(n)+1}$. Let $\ga(\ell(n))$ and $\ga_n(I)$ be the sets as defined by Definition~\ref{defi1}. Then, $\ga_n(I)$ is an optimal set of $n$-means with quantization error
\[V_n=\frac 1{18^{\ell{(n)}}}  \frac 1 8\Big(2^{\ell(n)+1}-n+\frac 1 9(n-2^{\ell(n)})\Big).\]
The number of such sets is ${}^{2^{\ell(n)}}C_{n-2^{\ell(n)}}$, where ${}^uC_v={u\choose v}$ is a binomial coefficient.
\end{theorem}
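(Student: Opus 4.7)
The plan is to prove the theorem by induction on $n$, using Lemma~\ref{lemma14} (the power-of-two case) as the anchor and Proposition~\ref{prop216} (the splitting proposition) as the inductive mechanism. The value of $V_n$ will then follow directly from Proposition~\ref{prop1}, and the count from a simple combinatorial argument about the construction in Definition~\ref{defi1}.

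For the base case $n = 2^{\ell(n)}$, Definition~\ref{defi1} forces $I = \es$, hence $\ga_n(\es) = \ga(\ell(n))$, which by Lemma~\ref{lemma14} is the unique optimal set of $n$-means. The formula in the theorem reduces to $\frac{1}{18^{\ell(n)}} \cdot \frac{1}{8} \cdot 2^{\ell(n)}$, matching the recursion in Lemma~\ref{lemma14}, and the count is $\binom{2^{\ell(n)}}{0} = 1$.

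For the inductive step, assume the theorem for some $n$ with $2^{\ell(n)} \leq n < 2^{\ell(n)+1}$ and fix any admissible $\ga_n(I)$. The crux is to identify $\tilde W(\ga_n(I))$. By Lemma~\ref{lemma5}, every \emph{unsplit} word $\go$ corresponding to $a(\go)$ or $a(\go,\infty) \in \ga(\ell(n)) \setminus I$ satisfies $p_\go s_\go^2 = \frac{1}{2^{\ell(n)}} \cdot \frac{1}{9^{\ell(n)}} = \frac{1}{18^{\ell(n)}}$, whereas every \emph{split} word $\gt$ arising from an element of $I$ satisfies $p_\gt s_\gt^2 = \frac{1}{18^{\ell(n)+1}}$. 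Since $\frac{1}{18^{\ell(n)}} > \frac{1}{18^{\ell(n)+1}}$, the set $\tilde W(\ga_n(I))$ consists precisely of the unsplit words. Proposition~\ref{prop216} then guarantees that splitting any one of them produces an optimal $(n+1)$-set which, by direct inspection of Definition~\ref{defi1}, equals $\ga_{n+1}(I \uu \set{c})$ for the chosen unsplit element $c$. Conversely, every $\ga_{n+1}(J)$ is realized in this way: in the generic range $n+1 < 2^{\ell(n)+1}$ by picking any $c \in J$ and $I = J \setminus \set{c}$, and in the transition case $n+1 = 2^{\ell(n)+1}$ (where $J = \es$ and $\ga_{n+1}(\es) = \ga(\ell(n)+1)$) by taking $I = \ga(\ell(n)) \setminus \set{c}$ for any $c$.

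The explicit formula for $V_n$ is then immediate from Proposition~\ref{prop1}, and the enumeration follows since distinct $I \ci \ga(\ell(n))$ give distinct $\ga_n(I)$ (the split/unsplit status of every word in $\ga(\ell(n))$ is recoverable from $\ga_n(I)$), so the count equals the number of subsets of $\ga(\ell(n))$ of size $n-2^{\ell(n)}$, namely $\binom{2^{\ell(n)}}{n - 2^{\ell(n)}}$. The main bookkeeping obstacle I anticipate is verifying that Definition~\ref{defi1} is consistent across the transition $n+1 = 2^{\ell(n)+1}$, so that the inductive chain closes correctly when all $2^{\ell(n)}$ elements of $\ga(\ell(n))$ have been split and the configuration returns to the pure power-of-two set $\ga(\ell(n)+1)$.
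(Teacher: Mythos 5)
Your proposal is correct and follows essentially the same route as the paper: anchor at the power-of-two case via Lemma~\ref{lemma14}, then split elements one at a time via Proposition~\ref{prop216}, with the error formula read off from Proposition~\ref{prop1} and the count coming from choosing $I$. The paper organizes the induction slightly differently (it fixes $I$, lists its elements, and applies Proposition~\ref{prop216} iteratively along the chain $\ga_{2^{\ell(n)}+j}$, $j = 0, 1, \ldots, n-2^{\ell(n)}$), whereas you run a single induction on $n$ over all admissible $I$ at once and close the loop at $n+1 = 2^{\ell(n)+1}$; these are equivalent. One small merit of your write-up is that you make explicit the verification that $\tilde W(\ga_n(I))$ consists exactly of the unsplit words (the maximal-$p_\go s_\go^2$ condition is $\frac{1}{18^{\ell(n)}}$ versus $\frac{1}{18^{\ell(n)+1}}$), a point the paper leaves implicit when it invokes Proposition~\ref{prop216} repeatedly.
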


\begin{proof}
By Lemma~\ref{lemma14}, $\ga(\ell(n))$ is an optimal set of $2^{\ell(n)}$-means. Choose $I\sci \ga(\ell(n))$ such that $\te{card}(I)=n-2^{\ell(n)}$. List the elements of $I$ as $a_1, a_2, \cdots, a_{n-2^{\ell(n)}}$, i.e., $I=\set{a_j : 1\leq j\leq n-2^{\ell(n)}}$. Construct the sets $A_j$ for $1\leq j\leq n-2^{\ell(n)}$ as follows:
\[
A_j:=\left\{\begin{array} {ll}
\set{a(\go1), a(\go1, \infty)} \te{ if } a_j=a(\go) \te{ for some } \go \in \D N^\ast, & \\
\set {a(\go^-(\go_{|\go|}+1)), a(\go^-(\go_{|\go|}+1), \infty)} \te{ if } a_j=a(\go, \infty) \te{ for some } \go \in \D N^\ast. &
\end{array}\right. \]
For $1\leq j\leq n-2^{\ell(n)}$, set \[\ga_{2^{\ell(n)}+j}=(\ga(\ell(n))\setminus \mathop{\uu}\limits_{k=1}^j\set{a_k})\uu A_1\uu A_2\uu \cdots \uu A_j.\]
As shown in Lemma~\ref{lemma14}, proceeding inductively, we see that
the set  $\ga_n(I):=\ga_{2^{\ell(n)}+(n-2^{\ell(n)})}=(\ga(\ell(n))\setminus I)\uu A_1\uu A_2\uu \cdots \uu A_{n-2^{\ell(n)}}$ forms an optimal set of $n$-means. Then, using Proposition~\ref{prop1}, we obtain the quantization error as
\[V_n=\int\min_{a\in \ga_n}(x-a)^2 dP=\frac 1{18^{\ell{(n)}}}  \frac 1 8\Big(2^{\ell(n)+1}-n+\frac 1 9(n-2^{\ell(n)})\Big).\]
Since the subset $I$ from the set $\ga(\ell(n))$ can be chosen in ${}^{2^{\ell(n)}}C_{n-2^{\ell(n)}}$ different ways, the number of $\ga_n(I)$ is ${}^{2^{\ell(n)}}C_{n-2^{\ell(n)}}$.
Thus, the proof of the theorem is complete.
\end{proof}

\end{document}